\def\CC{{\mathbb{C}}}
\def\A{{\mathcal{A}}}
\def\B{{\mathcal{B}}}
\def\O{{\mathcal{O}}}
\DeclareMathOperator{\codim}{codim}
\DeclareMathOperator{\Der}{Der}
\DeclareMathOperator{\pd}{pd}
\DeclareMathOperator{\reg}{reg}
\numberwithin{equation}{section}
\newtheorem{theorem}{Theorem}[section]
\newtheorem{prop}[theorem]{Proposition}
\newtheorem{cor}[theorem]{Corollary}
\newtheorem{lemma}[theorem]{Lemma}
\newtheorem{define}[theorem]{Definition}
\newtheorem{conj}[theorem]{Conjecture}
\theoremstyle{remark}
\newtheorem{rem}[theorem]{Remark}
\newtheorem{example}[theorem]{Example}
\title{Addition theorems for Ziegler pairs of hyperplane arrangements}
\author{Takuro Abe, Lukas K\"uhne and Piotr Pokora}
\date{\today} 
\begin{document}

\begin{abstract}
Inspired by Terao's freeness conjecture, we examine Ziegler pairs, which are pairs of hyperplane arrangements that share the same underlying matroid but have different modules of logarithmic derivations. In this paper, we present a general construction that yields the first known families of Ziegler pairs in arbitrary dimension and size, starting from examples in the complex projective plane.
\end{abstract}
\maketitle
\section{Introduction}

A hyperplane arrangement $\mathcal{A}$ can be studied from two perspectives: a combinatorial one, via its intersection lattice $L(\mathcal{A})$, which is equivalent to a simple matroid; and an algebraic one, via its module of logarithmic derivations $D(\mathcal{A})$. A long-standing open conjecture tries to unify these two points of view.

\begin{conj}[Terao's conjecture \cite{OT}]
	If $\A_1$ and $\A_2$ are two central arrangements defined over the same field with the same combinatorics, that is $L(\A_1)\cong L(\A_2)$, then the derivation module $D(\A_1)$ is free if and only if $D(\A_2)$ is free.
\end{conj}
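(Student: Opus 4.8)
Terao's conjecture is the central open problem of the subject and no proof is known; what I can offer is the natural line of attack together with the precise point at which it stalls. The algebraic input is Saito's criterion \cite{OT}: $D(\A)$ is free with exponents $(e_1,\dots,e_\ell)$ exactly when one can produce $\ell$ homogeneous logarithmic derivations $\theta_1,\dots,\theta_\ell$ of the prescribed degrees whose coefficient matrix has determinant a nonzero scalar multiple of the defining polynomial $Q(\A)$. The entire content of the conjecture is that the existence of such a system is forced by $L(\A)$ alone, whereas a priori it is a condition on the chosen linear forms; so the plan must be to convert this algebraic existence statement into one that transports across the isomorphism $L(\A_1)\cong L(\A_2)$.

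First I would pin down the combinatorial shadow of freeness. By Terao's factorization theorem, freeness of $D(\A)$ forces the characteristic polynomial to split as $\chi(\A,t)=\prod_i(t-e_i)$, and since $\chi$ is an invariant of $L(\A)$ the exponents of a free arrangement are already combinatorial; this fixes the only candidate exponents for $\A_2$. I would then attempt an induction on the number of hyperplanes via Terao's addition-deletion theorem: choosing $H\in\A_i$ and forming the deletion $\A_i\setminus\{H\}$ and the restriction $\A_i|_H$, any two of the three freeness statements for the triple, together with the compatibility of their exponents, yield the third. Because the lattices of all deletions and restrictions are themselves read off $L(\A_i)$, the hope is to build parallel inductive trees for $\A_1$ and $\A_2$ and push freeness from one to the other.

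The hard part — and the reason the conjecture has stood for decades — is that this dictionary is not combinatorially closed: the addition-deletion hypothesis presupposes freeness of a proper subarrangement, which is exactly the statement one is trying to prove, and there is no guarantee that an inductive tree certifying freeness of $\A_1$ stays inside the free class when transported to $\A_2$. Yoshinaga's homological criterion comes closest to a direct test but still requires local data along the flats that is not manifestly matroidal. I therefore expect the decisive obstacle to be the absence of any purely combinatorial characterization of freeness, and I regard the Ziegler pairs constructed in this paper — arrangements with a common matroid but genuinely different modules $D(\A)$ — as concrete evidence that the algebra overflows the combinatorics, so that it is precisely the survival of the single bit ``free or not'' that would have to be explained.
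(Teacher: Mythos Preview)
Your proposal is appropriate in that you correctly identify the statement as an open conjecture and do not pretend to prove it. The paper likewise offers no proof: Terao's conjecture is quoted as motivation, explicitly described as ``still wide open, even for line arrangements in $\mathbb{P}^2$,'' and the body of the paper is devoted to constructing Ziegler pairs, not to attacking the conjecture itself. So there is nothing to compare your argument against.

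Your discussion of the natural inductive strategy via addition--deletion and of why it stalls is accurate and well-informed; the observation that freeness of a triple is not combinatorially closed is exactly the standard obstruction. One small caution: you frame the Ziegler pairs of this paper as ``evidence that the algebra overflows the combinatorics,'' which is fair, but be careful not to overstate this as evidence \emph{against} the conjecture --- none of the Ziegler pairs here (or elsewhere) involve a free arrangement, so they are compatible with Terao's conjecture being true. They show that $D(\A)$ is not a lattice invariant in general, while leaving open whether the single bit ``free or not'' might still be.
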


This conjecture is still wide open, even for line arrangements in $\mathbb{P}^2$.
Note that the assumption on the field is essential as Ziegler observed in~\cite{Ziegler_Characteristic} that there exists a non-free arrangement of $9$ lines over $\mathbb{F}_3$ with underlying matroid the dual Hesse configuration whereas this matroid also admits free realizations over fields of characteristic not $3$.
However, we know that Terao's conjecture (in this stronger form that fixes the field) holds for line arrangements in $\mathbb{P}^2$ over any field with up to $14$ lines~\cite{Tearo14}.

Ziegler was the first to provide evidence against this conjecture, describing a pair of line arrangements with the same combinatorics, but with non-isomorphic modules of derivations~\cite{Ziegler}.
Pairs of arrangements with these properties are therefore nowadays called \emph{Ziegler pairs}, which is the main focus of this paper.
To the best of our knowledge, the known sources of Ziegler pairs are the following.

\begin{itemize}
	\item The Ziegler pair described in~\cite{Ziegler} consisting of two line arrangements with $9$ lines each that intersect in $6$ triple points.
	The degrees of a minimal generating system of their derivation modules are $(1,5,6,6,6)$ and $(1,6,6,6,6,6,6)$.
	It was later pointed out by Schenck that the former case is characterized by the condition that the $6$ triples points lie on a conic~\cite{Sch12}.
	This example was recently studied in detail by Dimca and Sti\-cla\-ru in~\cite{DS24}.
	\item This Ziegler pair was generalized through concepts from \emph{rigidity theory} in~\cite{DST25}. In particular, the authors propose a construction of an infinite family of Ziegler pairs of line arrangements.
	\item It was observed in~\cite{KLP24} that the matroids with a \emph{singular realization space} found in~\cite{CL25} also yield Ziegler pairs.
	The derivation module differs when the realization, a line arrangement with $12$ lines, is in the singular locus of the realization space in these examples.
	\item The \emph{elliptic matroid} $E_n$ is the rank $3$ matroid on the ground set $\{0,1,\ldots,n-1\}$ where a triple is a non-basis if and only if the sum of these numbers is zero modulo $n$.
	The name of this matroid stems from the fact that for $n\ge 10$, the realization space is birational to the elliptic modular curve $X_1(n)$~\cite{BR24}.
	The matroid $E_{11}$ also appeared prominently in~\cite{BHKL25} as a counter-example regarding the space of Lorentzian polynomials.
	Moreover, Cuntz and the third author observed in \cite{CP} that the realization space of $E_{10}$ also yields a Ziegler pair as the derivation module differs on a rational point of this realization space, i.e., the elliptic modular curve $X_1(10)$.
\end{itemize}

Let us mention that in all of these examples there is a \emph{hidden collinearity} in one part of the pair, that is there are collinear intersection points of the arrangement which are not forced to be collinear by the underlying matroid. 

The main result of the paper is a description of the degrees of a minimal system of generators of the derivation of a line arrangement after adding a generic line (\Cref{main}) or after coning the arrangement several times and then adding a generic hyperplane (\Cref{mainhigh}).
This yields new families of Ziegler pairs:
\begin{theorem}[see Corollary \ref{mainT}]
There exist Ziegler pairs of irreducible arrangements of arbitrarily large size and arbitrarily large dimension.
\end{theorem}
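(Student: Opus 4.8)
The plan is to bootstrap from a fixed Ziegler pair of line arrangements in $\P^2$ and apply the two addition theorems to inflate both its size and its dimension while preserving the two defining properties of a Ziegler pair: equality of the underlying matroids and inequality of the derivation modules. Concretely, I would start with one of the known planar examples listed in the introduction --- say Ziegler's original pair $(\A_1,\A_2)$ of nine lines with generator degrees $(1,5,6,6,6)$ and $(1,6,6,6,6,6,6)$ --- and regard its distinguishing invariant as the \emph{degree sequence of a minimal generating system} of $D(\A_i)$, which differs between the two members.

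First, to produce pairs of arbitrarily large size, I would repeatedly add a generic line. Adding a generic line $H$ to a line arrangement $\A$ meets every existing line in a new double point, so the new matroid $L(\A\cup\{H\})$ is the generic single-element extension of $L(\A)$; since this extension is combinatorially canonical, $L(\A_1)\cong L(\A_2)$ forces $L(\A_1\cup\{H\})\cong L(\A_2\cup\{H'\})$ for generic $H,H'$. Simultaneously, \Cref{main} computes the degree sequence of $D(\A\cup\{H\})$ purely in terms of the degree sequence of $D(\A)$. The crucial point to extract from \Cref{main} is that this transformation on degree sequences keeps the two inputs coming from $\A_1$ and $\A_2$ distinct, i.e.\ the two outputs remain unequal; granting this, $(\A_1\cup\{H\},\A_2\cup\{H'\})$ is again a Ziegler pair, now with one more line, and iterating drives the size to infinity while staying in $\P^2$.

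Second, to raise the dimension, I would use the coning operation together with \Cref{mainhigh}. Coning an arrangement increases its rank by one and has a completely understood effect on both the matroid (it is a free coextension, hence preserves the isomorphism type of the pair) and the derivation module (the generator degrees shift predictably, essentially by appending the new cone variable). Coning $k$ times and then adjoining a generic hyperplane, as controlled by \Cref{mainhigh}, lands the pair in dimension $2+k$ while keeping the matroids isomorphic and --- by the same persistence of the degree difference --- keeping the derivation modules non-isomorphic; combining this with the generic additions of the previous paragraph yields pairs that are simultaneously large in size and large in dimension. Finally, irreducibility (indecomposability as a product of arrangements in complementary coordinate subspaces) is arranged by the genericity of the added line or hyperplane, which prevents the arrangement from splitting off a coordinate factor; this I would verify directly from the position of the generic hyperplane relative to the cone.

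The main obstacle, and the crux that the two addition theorems must supply, is the persistence asserted above: showing that the derivation modules genuinely \emph{stay} non-isomorphic. The matroid equality is soft and combinatorial, but the inequality of modules is the subtle half of a Ziegler pair, and it is exactly here that one needs the precise numerical conclusions of \Cref{main} and \Cref{mainhigh} --- that they pin down the \emph{full} degree sequence of the enlarged module rather than only bounding it --- so that the difference present in the base pair provably propagates through every addition and every coning step. Everything else (the matroid computations and the genericity arguments for irreducibility) is routine once this propagation is established.
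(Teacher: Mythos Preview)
Your proposal is correct and follows essentially the same route as the paper: start from a known planar Ziegler pair, use \Cref{main} (via Lemma~\ref{lem:generic_addition}) to enlarge it within $\CC^3$, and then apply \Cref{mainhigh} to the repeatedly coned pair to reach any $\CC^\ell$, with irreducibility forced by the generic hyperplane breaking the product structure of the cone. You have also correctly isolated the only nontrivial point, namely that both addition theorems output the \emph{entire} degree sequence as an explicit function of the input sequence, so that the distinction $\exp_0(\A_1)\neq\exp_0(\A_2)$ is carried through every step; the matroid side and the irreducibility check are indeed routine once this is in hand.
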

In particular, these are the first irreducible (non-trivial) Ziegler pairs in dimensions greater than three.

This article is organized as follows: Section \ref{sec:prelim} presents the necessary preliminaries related to hyperplane arrangements and their derivation modules. Section \ref{sec:generic} focuses on two notions of generic hyperplanes which is the technical heart of our paper. Section \ref{sec:dim3} provides a general recipe for constructing Ziegler pairs in $\mathbb{C}^{3}$ by adding suitably chosen hyperplanes. The final Section~\ref{sec:general} describes a general construction of non-trivial examples of Ziegler pairs in arbitrary dimensions.

\section{Preliminaries}\label{sec:prelim}

We begin with recalling the definitions of the main players of this article. Most of these notions are taken from \cite{OT}.

Let $V = \mathbb{C}^{\ell}$ and consider $S = {\rm Sym}^{*}(V^{*}) \cong \mathbb{C}[x_{1}, \ldots , x_{\ell}]$.
For $\ell=3$, we will assume , starting from Section 4,  that our coordinates are $x, y, z$, and for $\ell=4$, we add a new variable, $w$, to this list.
Define ${\rm Der}(S) =\bigoplus_{i=1}^{\ell} S\cdot \partial_{x_{i}}$. Let $\mathcal{A}$ be a central arrangement of hyperplanes in $V$, i.e., a finite number of linear hyperplanes in $V$.
Throughout this article, for each $H \in \mathcal{A}$ we fix a linear form $\alpha_{H}$ such that ${\rm ker}(\alpha_{H}) = H$, and denote by $Q(\mathcal{A}) = \prod_{H \in \mathcal{A}}\alpha_{H}$ the defining equation of $\mathcal{A}$.
The logarithmic derivation module $D(\mathcal{A})$ of $\mathcal{A}$ is defined as
$$D(\mathcal{A}) = \{\theta \in {\rm Der}(S) \mid \theta(\alpha_{H}) \in S \cdot \alpha_{H} \,\text{ for all } \, H \in \mathcal{A}\}.$$
Recall that $D(\mathcal{A})$ is an $S$-graded reflexive module of rank $\ell$.
We say that the arrangement $\mathcal{A}$ is \emph{free} with the exponents ${\rm exp}(\A) = (d_{1}, \ldots ,d_{\ell})$ if there exists a homogeneous $S$-basis $\theta_{1}, \ldots , \theta_{\ell}$ for $D(\mathcal{A})$ such that ${\rm deg} (\theta_{i}) = d_{i}$ for all $i=1,\dots,\ell$.
For $\theta \in {\rm Der}(S)$, we say that $\theta$ is homogeneous of degree $d$ if 
$\theta(\alpha)$ is zero or a homogeneous polynomial of degree $d$ for all $\alpha \in V^{*}$. 

Observe that if $\mathcal{A}$ is non-empty, every generating set of $D(\mathcal{A})$ contains at least one element of degree $1$, which can always be chosen as the well-known Euler derivation
$$\theta_{E} = \sum_{i=1}^{\ell}x_{i} \partial_{x_{i}}.$$
We define 
$$D_{0}(\mathcal{A}) = \{\theta \in {\rm Der}(S) \mid \theta(Q(\mathcal{A})) = 0 \},$$
and let us recall that there is the following decomposition as a direct sum
$$D(\mathcal{A}) = S\cdot \theta_{E} \oplus D_{0}(\mathcal{A}),$$
hence the freeness of $\mathcal{A}$ boils down to checking the freeness of $D_{0}(\mathcal{A})$.

Given an arrangement $\A$ and a hyperplane $H\in \A$ one often considers the \emph{deletion-restriction-triple} of $H$ in $\A$.
The deletion $\A' := \A\setminus\{H\}$ is $\A$ without $H$.
The restriction $\A^H$ is an arrangement in the hyperplane $H$ defined as
\[
	\A^H := \{K\cap H \mid K\in \A' \}.
\]
This triple yields an exact sequence of derivation modules, the well-studied \emph{Euler sequence}, see~\cite[Prop. 4.45]{OT}:
\[
0\to D(\A')\stackrel{\cdot \alpha_{H}}{\longrightarrow} D(\A) \stackrel{\rho}{\longrightarrow} D(\A^H),
\]
where $\rho$ is the map induced by the quotient map $S\to S/\alpha_{H}S$, more precisely $\rho(\theta)(\overline{f}) = \overline{\theta(f)}$. The map $\rho$ is called the \textit{Euler restriction map}.
This sequence is not right exact in general. However, assuming that $\A'$ is free, the first author has recently proven that the Euler sequence is, in fact, right exact, i.e., $\rho$ is surjective~\cite{Abe24}. A crucial step in our proofs below is to show that this sequence is right exact under certain genericity assumptions. In the course of our proofs we will use the following result. 
\begin{theorem}[{\cite[Theorem~2.5]{ADP}}]
Let $\A$ be an arrangement in $\CC^3$.
\label{ADPTT}
With the above notation, let 
$$
0 \rightarrow \oplus_{i=1}^{m-2} S[-c'_i]
\rightarrow \oplus_{i=1}^m S[-d_i]
\rightarrow D_0(\A') \rightarrow 0
$$
be a minimal free resolution of $D_0(\A')$. If $|\A^{H}|>c_i'-1$ for all $i$, then
the Euler exact sequence 
$$
0 \rightarrow D(\A')[-1]
\stackrel{\cdot \alpha_H}{\longrightarrow} D(\A)
\stackrel{\rho}{\longrightarrow} D(\A^{H}) 
$$
is right exact.
\end{theorem}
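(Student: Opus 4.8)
The plan is to show that the cokernel $C := \coker(\rho)$ vanishes. Since $\A^H$ is a central arrangement of rank at most two in $H\cong\CC^2$, the module $D(\A^H)$ is free over $S_H := S/\alpha_H S \cong \CC[u,v]$ with exponents $(1,|\A^H|-1)$, and $\rho(\theta_E)$ is the restricted Euler derivation, a basis element of degree $1$. Splitting off the Euler summand therefore reduces $\rho$ to the map $\rho_0\colon D_0(\A)\to D_0(\A^H) = S_H[-(|\A^H|-1)]$ with the same cokernel $C$. In particular $C$ is a cyclic $S_H$-module generated in degree $|\A^H|-1$, hence nonzero only in internal degrees $\ge |\A^H|-1$.

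The first substantial step is to prove that $C$ has finite length, equivalently that $\rho_0$ is surjective after localizing $S$ at the prime corresponding to any point $p$ of the line $L = \Proj(S_H)\subseteq \mathbb{P}^{2}$. Localizing kills every hyperplane of $\A$ not passing through $p$, so the localized deletion is a central arrangement of rank at most two and is therefore free; since the formation of the Euler restriction sequence commutes with localization, the surjectivity of $\rho$ for free deletions proven in \cite{Abe24} applies and gives $C_{\mathfrak p}=0$. Thus $C$ is supported at the irrelevant ideal $\mathfrak m$, i.e. $C = H^{0}_{\mathfrak m}(C)$.

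Next I would read off a vanishing range for local cohomology from the given resolution. Applying $H^{\bullet}_{\mathfrak m}(-)$ to
$$0 \to \oplus_{i=1}^{m-2} S[-c'_i] \to \oplus_{i=1}^{m} S[-d_i] \to D_0(\A') \to 0$$
and using that free $S$-modules have local cohomology concentrated in cohomological degree $3$, we obtain an injection $H^{2}_{\mathfrak m}(D_0(\A')) \hookrightarrow H^{3}_{\mathfrak m}\big(\oplus_i S[-c'_i]\big)$; as $H^{3}_{\mathfrak m}(S[-c])$ is nonzero only in internal degrees $\le c-3$, this yields $H^{2}_{\mathfrak m}(D_0(\A'))_k = 0$ for all $k \ge \max_i c'_i - 2$. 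I now connect $C$ to this module. Writing $K = \im(\rho_0)$, the four-term sequence splits into $0 \to D_0(\A')[-1] \to D_0(\A) \to K \to 0$ and $0 \to K \to D_0(\A^H) \to C \to 0$. Because $D_0(\A^H)$ is free over $S_H$ we have $H^{0}_{\mathfrak m}(D_0(\A^H)) = H^{1}_{\mathfrak m}(D_0(\A^H)) = 0$, so the second sequence gives $C = H^{0}_{\mathfrak m}(C) \cong H^{1}_{\mathfrak m}(K)$, and because $D_0(\A)$ is reflexive (a summand of the reflexive module $D(\A)$), $H^{1}_{\mathfrak m}(D_0(\A)) = 0$ and the first sequence gives $H^{1}_{\mathfrak m}(K) \hookrightarrow H^{2}_{\mathfrak m}(D_0(\A'))[-1]$. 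Hence $C$ embeds into $H^{2}_{\mathfrak m}(D_0(\A'))[-1]$, which is nonzero only in degrees $\le \max_i c'_i - 2$. The hypothesis $|\A^H| > c'_i - 1$ forces $\max_i c'_i - 2 < |\A^H| - 1$, so this range is disjoint from the degrees where $C$ can live, and therefore $C = 0$.

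The step I expect to be the main obstacle is the finite-length claim of the second paragraph: the clean local-cohomology squeeze of the third paragraph only controls $H^{0}_{\mathfrak m}(C)$, the $\mathfrak m$-torsion of $C$, so one must independently exclude a positive-dimensional cokernel. This is precisely where the hypothesis $\A\subset\CC^3$ is essential, since the localized deletions are then rank-two and automatically free; in higher dimension these localizations need not be free and the reduction to the torsion part would fail.
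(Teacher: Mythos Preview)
The paper does not prove this statement; Theorem~\ref{ADPTT} is quoted from \cite{ADP} and used as a black box in the proof of Lemma~\ref{lem:comb_generic}, so there is no in-paper argument to compare against directly.

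That said, your argument is correct and is the natural proof. The three steps line up cleanly: (i) localizing at any point $p\in L=\Proj(S_H)$ reduces the deletion to a rank-two arrangement, hence free, so the sheafified Euler sequence is exact along $H$ and $C$ has finite length; (ii) breaking the four-term sequence into two short exact sequences and using reflexivity of $D_0(\A)$ together with $\depth D_0(\A^H)=2$ over $S_H$ yields $C=H^0_{\mathfrak m}(C)\cong H^1_{\mathfrak m}(K)\hookrightarrow H^2_{\mathfrak m}(D_0(\A'))[-1]$; (iii) the first-syzygy degrees $c'_i$ bound the top of $H^2_{\mathfrak m}(D_0(\A'))$ via $H^3_{\mathfrak m}(S[-c'_i])$, and the hypothesis $|\A^H|>c'_i-1$ then places the generator of $C$ above that range. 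Your degree bookkeeping is right: $H^2_{\mathfrak m}(D_0(\A'))[-1]$ vanishes in degrees $\ge \max_i c'_i-1$, while $C$ lives only in degrees $\ge |\A^H|-1\ge \max_i c'_i-1$.

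This is exactly the mechanism the paper itself leans on elsewhere: in the proof of Theorem~\ref{genzpC3} the vanishing of $H^2(\widetilde{D(\A_i')}(-1-b^i+|\A_i^{H}|-1))$ is singled out as ``the key observation,'' which is your step~(iii) in sheaf language, and the local-freeness-along-$H$ argument of your step~(i) is used verbatim in the proof of Theorem~\ref{mainhigh} for the higher-dimensional analogue. So your reconstruction is very much in the intended spirit. A minor remark: invoking \cite{Abe24} for the surjectivity of $\rho$ over a free deletion is more than needed; for rank-two localizations this is classical (Terao's addition--deletion), though the citation is harmless.
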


Consider a direct sum decomposition $V = V_{1} \oplus V_{2}$ of the vector space $V$. Let $\mathcal{A}_{i}$ be an arrangement in $V_{i}$ and denote by $S_{i}$ be the coordinate ring of $V_{i}$ with $i \in \{1,2\}$. 
Then $S_{1} \otimes_{\mathbb{C}}S_{2} = S$, and we can define
$$\mathcal{A}_{1} \times \mathcal{A}_{2} := \{H_{1} \oplus V_{2} \, : \, H_{1} \in \mathcal{A}_{1}\} \cup \{V_{1} \oplus H_{2} \, : \, H \in \mathcal{A}_{2}\}.$$
We call an arrangement \emph{irreducible} if it cannot be written as a product of two non-empty arrangements.

In this setting, the following result holds.
\begin{prop}[{\cite[Proposition 4.14]{OT}}]
\label{deradd}
$$D(\mathcal{A}_{1} \times \mathcal{A}_{2}) = S\cdot D(A_{1}) \oplus S\cdot D(\mathcal{A}_{2}).$$
\end{prop}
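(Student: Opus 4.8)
The plan is to split the coordinates according to the decomposition $V = V_1 \oplus V_2$ and show that the defining conditions for membership in $D(\mathcal{A}_1 \times \mathcal{A}_2)$ decouple into a condition on the $V_1$-part and a condition on the $V_2$-part of a derivation. Fix coordinates $x_1, \ldots, x_k$ on $V_1$ and $y_1, \ldots, y_m$ on $V_2$, so that $S_1 = \mathbb{C}[x_1, \ldots, x_k]$, $S_2 = \mathbb{C}[y_1, \ldots, y_m]$ and $S = S_1 \otimes_{\mathbb{C}} S_2$. Any $\theta \in \Der(S)$ decomposes uniquely as $\theta = \theta_x + \theta_y$ with $\theta_x = \sum_i f_i \partial_{x_i}$ and $\theta_y = \sum_j g_j \partial_{y_j}$, where $f_i, g_j \in S$.

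The crucial observation is that each hyperplane of $\mathcal{A}_1 \times \mathcal{A}_2$ has a defining form pulled back from one of the two factors: a hyperplane $H_1 \oplus V_2$ has defining form $\alpha_{H_1} \in S_1$, and a hyperplane $V_1 \oplus H_2$ has defining form $\alpha_{H_2} \in S_2$. Consequently $\theta(\alpha_{H_1}) = \theta_x(\alpha_{H_1})$ (the $y$-part annihilates the $x$-linear form) and likewise $\theta(\alpha_{H_2}) = \theta_y(\alpha_{H_2})$. Therefore $\theta \in D(\mathcal{A}_1 \times \mathcal{A}_2)$ if and only if $\theta_x(\alpha_{H_1}) \in S \cdot \alpha_{H_1}$ for all $H_1 \in \mathcal{A}_1$ and $\theta_y(\alpha_{H_2}) \in S \cdot \alpha_{H_2}$ for all $H_2 \in \mathcal{A}_2$; the two families of conditions are entirely independent, which already yields the directness of the sum since $\theta_x$ uses only the $\partial_{x_i}$ and $\theta_y$ only the $\partial_{y_j}$.

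It then remains to identify the derivations $\theta_x = \sum_i f_i \partial_{x_i}$ with $f_i \in S$ satisfying $\theta_x(\alpha_{H_1}) \in S\alpha_{H_1}$ for all $H_1$ with the base-changed module $S \cdot D(\mathcal{A}_1)$, and symmetrically for $\theta_y$. The inclusion $S \cdot D(\mathcal{A}_1) \subseteq \{\theta_x : \theta_x(\alpha_{H_1}) \in S\alpha_{H_1}\ \forall H_1\}$ is immediate. For the reverse inclusion I would expand each coefficient along a $\mathbb{C}$-basis $\{m_\beta\}$ of monomials of $S_2$, writing $f_i = \sum_\beta f_{i,\beta}\, m_\beta$ with $f_{i,\beta} \in S_1$, so that $\theta_x = \sum_\beta m_\beta\, \theta_{x,\beta}$ where $\theta_{x,\beta} = \sum_i f_{i,\beta}\partial_{x_i}$ has coefficients in $S_1$. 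Since $S\alpha_{H_1} = (S_1\alpha_{H_1}) \otimes_{\mathbb{C}} S_2$ and the $m_\beta$ are linearly independent over $S_1$, the condition $\theta_x(\alpha_{H_1}) \in S\alpha_{H_1}$ forces $\theta_{x,\beta}(\alpha_{H_1}) \in S_1\alpha_{H_1}$ for every $\beta$, i.e. $\theta_{x,\beta} \in D(\mathcal{A}_1)$, whence $\theta_x \in S \cdot D(\mathcal{A}_1)$. Combining this with the previous paragraph gives the claimed equality.

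I expect the only genuinely delicate point to be this last step: verifying that the divisibility-defined submodule of $\Der(S)$ coincides with the extension of scalars $S \cdot D(\mathcal{A}_1)$ rather than something strictly larger. Conceptually this is where the flatness of $S_1 \to S$ enters, since $D(\mathcal{A}_1) = \ker\bigl(\Der(S_1) \to \bigoplus_{H_1} S_1/\alpha_{H_1}S_1\bigr)$ and tensoring a kernel with the free $S_1$-module $S$ preserves it. The monomial-expansion argument above simply makes this flatness concrete, so no abstract homological input is actually needed.
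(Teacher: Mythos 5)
Your proof is correct. The paper itself gives no argument for this proposition --- it is quoted verbatim from Orlik--Terao \cite{OT} --- and your proof (decoupling the conditions $\theta(\alpha_H)\in S\alpha_H$ along the splitting $\Der(S)=\bigoplus_i S\partial_{x_i}\oplus\bigoplus_j S\partial_{y_j}$, then using that $S$ is free over $S_1$ via a monomial basis of $S_2$ to identify the divisibility-defined submodule with $S\cdot D(\mathcal{A}_1)$) is essentially the standard proof found in that reference.
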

\noindent
Denote by $L(\mathcal{A})$ the intersection lattice of $\mathcal{A}$, i.e.,
$$L(\mathcal{A}) = \{ \cap_{H \in \mathcal{B}}H \, : \, \mathcal{B} \subseteq \mathcal{A}\},$$
which is ordered by reverse inclusion.
Recall that if $\mathcal{A}$ is the product arrangement of the form $\mathcal{A}_{1} \times \mathcal{A}_{2}$, we have a natural isomorphism of the lattices
\begin{equation}
\label{latadd}
L(\mathcal{A}_{1}) \times L(\mathcal{A}_{2}) \cong L(\mathcal{A}_{1} \times \mathcal{A}_{2}).
\end{equation}

A special case of the direct product of an arrangement is its coning which we will use to construct high-dimensional examples.
\begin{define}
	Let $\A$ be an arrangement in $\CC^k$ for some $k\ge 3$ defined by a polynomial $f\in \CC[x_1,\dots,x_k]$.
	The \emph{coning} $c\A$ of $\A$ is the arrangement defined by the polynomial $x_{k+1}\cdot f\in \CC[x_1,\dots,x_{k+1}]$.
\end{define}

The coning of $\A$ is thus the direct product of $\A$ with the $1$-dimensional arrangement consisting of a point on a line.

Let us now define the main object of our studies, namely a Ziegler pair.

\begin{define}
Let $\mathcal{A}_{1}, \mathcal{A}_{2} \subset \mathbb{C}^{\ell}$ be two central hyperplane arrangements such that $L(\mathcal{A}_{1}) \cong L(\mathcal{A}_{2})$. We say that the pair $\mathcal{A}_{1}$ and $\mathcal{A}_{2}$ is a \emph{Ziegler pair} if the associated modules of logarithmic derivations $D(\mathcal{A}_{1})$ and $D(\mathcal{A}_{2})$  have different minimal free resolutions and thus are not isomorphic.
\end{define}
To study the homological properties of hyperplane arrangements, we need the following crucial definition. 

\begin{define}
	Let $\A$ be an arrangement of hyperplanes in $\CC^\ell$. We say that the multiset $(d_1,\ldots,d_s)$ is a \emph{degree sequence} of $\A$, denoted by $\exp(\A)$, if 
	the $0$-th syzygy of a minimal free resolution of $D(\A)$ is of the form 
	$\oplus_{i=1}^s S[-d_i]$.
	We denote the degrees of a minimal generating set of  $D_0(\A)$ by $\exp_0(\A)$.
	\label{degreesequence}
\end{define}

Let $\theta_E,\theta_2,\dots,\theta_s$ be a minimal set of generators of $D(\A)$ for a given arrangement in $\CC^k$.
The module of derivations $D(c\A)$ of the coning is then minimally generated by $\theta_E,\theta_2,\dots,\theta_s,x_{k+1}\partial_{x_k+1}$.
Thus it holds that
\[
	\exp(c\A)=(1,\deg \theta_E,\deg \theta_2,\dots,\deg\theta_s).
\]

Finally, let us recall the following important bound on the Castelnuovo--Mumford regularity of $D(\A)$ that we will use in the forthcoming sections, here for an $S$-graded module $M$ and its minimal free resolution
$$
0 \rightarrow \oplus_{i=1}^{n_s} S[-d_i^s] \rightarrow 
\cdots \rightarrow \oplus_{i=1}^{n_1} S[-d_i^0] \rightarrow M \rightarrow 0,
$$
\textit{the Castelnuovo--Mumford regularity} of $M$ is defined by 
$$
\reg(M):=\max \{ d_{i}^j -j \mid 0 \le j \le s,\ 1 \le i \le n_j\}.
$$
\begin{prop}[{\cite[Proposition 1.2]{MS}}]
	Let $\A$ be an arrangement in~$\CC^\ell$. Then 
	$\reg D(\A) \le |\A|-\ell+1$.
	\label{MS}
\end{prop}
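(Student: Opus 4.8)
The plan is to present $D(\A)$ as the kernel of an explicit map out of the free module $\Der(S)$, and then to control regularity through the standard behaviour of $\reg$ along short exact sequences, reducing the whole estimate to a regularity bound for a module supported on the singular locus of the arrangement.

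First I would record the tautological presentation of $D(\A)$. The assignment $\theta=\sum_i f_i\partial_{x_i}\mapsto\big(\overline{\theta(\alpha_H)}\big)_{H\in\A}$ is a degree-preserving homomorphism of graded $S$-modules
$$
\phi\colon \Der(S)=S^{\ell}\longrightarrow \bigoplus_{H\in\A}S/\alpha_H S,
$$
since each $\partial_{x_i}$ has degree $0$ and $\theta(\alpha_H)$ is homogeneous of the same degree as $\theta$; by the definition of the logarithmic derivation module, $D(\A)=\ker\phi$. Writing $n=|\A|$ and $N:=\im\phi\cong S^{\ell}/D(\A)$, we obtain the short exact sequence
$$
0\longrightarrow D(\A)\longrightarrow S^{\ell}\longrightarrow N\longrightarrow 0 .
$$

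Next I would use that for an exact sequence $0\to A\to B\to C\to 0$ one has $\reg A\le\max(\reg B,\reg C+1)$. Since $\reg S^{\ell}=0$, the sequence above gives $\reg D(\A)\le\max(0,\reg N+1)$, so it suffices to prove $\reg N\le n-\ell$. To bound $N$ I would exploit a second presentation of the same module: as $Q:=Q(\A)$ is reduced, $\theta\in D(\A)$ if and only if $\theta(Q)\in(Q)$, so $D(\A)$ is also the kernel of the degree-$(n-1)$ map $\psi\colon S^{\ell}\to S/(Q)$, $\partial_{x_i}\mapsto\overline{\partial_{x_i}Q}$. Consequently $N\cong\im\psi$ fits into
$$
0\longrightarrow N\longrightarrow (S/(Q))[n-1]\longrightarrow (S/J_Q)[n-1]\longrightarrow 0,
$$
where $J_Q=(\partial_{x_1}Q,\dots,\partial_{x_\ell}Q)$ is the Jacobian ideal and the last term is the Jacobian ring (here $Q\in J_Q$ by the Euler relation, so $\coker\psi=S/J_Q$). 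Since $\reg(S/(Q))=n-1$, we have $\reg\big((S/(Q))[n-1]\big)=0$, and the sequence yields $\reg N\le\max\big(0,\reg(S/J_Q)-n+2\big)$. Hence the entire proposition reduces to the single estimate
$$
\reg\big(S/J_Q\big)\ \le\ 2n-\ell-2 .
$$

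This Jacobian-ring bound is, in my view, the genuine obstacle: the partials $\partial_{x_i}Q$ fail to form a regular sequence exactly along the flats of codimension $\ge 2$, so $\reg(S/J_Q)$ cannot be read off from a Koszul complex and the estimate must encode the combinatorics of those flats. I would attack it either by stratifying $S/J_Q$ (equivalently the cokernel $C=\coker\phi$, which localizes to zero away from the rank-$\ge2$ flats and hence has $\dim C\le\ell-2$) according to the intersection lattice, or by local duality, bounding the top nonzero degree of $H^{\ell-1}_{\mathfrak m}(S^{\ell}/D(\A))$ through $\Ext^{1}_S\big(S^{\ell}/D(\A),S\big)$. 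As a consistency check, for a free arrangement with $\exp(\A)=(1,a,b)$ in $\CC^{3}$ a direct computation gives $\reg(S/J_Q)=n+b-3\le 2n-5=2n-\ell-2$, with equality precisely when $a=1$; this both confirms the reduction and shows the final bound is sharp, so no slack is lost along the way.
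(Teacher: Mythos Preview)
The paper does not prove this proposition; it is quoted verbatim from \cite[Proposition~1.2]{MS}, with only a remark that the case $\ell=3$ is due to Schenck. So there is no in-paper proof to compare against, and I can only assess your argument on its own merits.

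Your set-up is correct: the two presentations of $D(\A)$ as $\ker\phi$ and $\ker\psi$ do coincide (this is the standard equivalence $\theta\in D(\A)\Leftrightarrow\theta(Q)\in(Q)$ for reduced $Q$), the shifts are computed properly, and the regularity inequalities you invoke for short exact sequences are the right ones. The problem is that what you have written is a reformulation, not a proof. You explicitly stop at the bound $\reg(S/J_Q)\le 2n-\ell-2$ and only list possible angles of attack. Moreover, this residual bound is essentially \emph{equivalent} to the proposition: the companion inequality $\reg C\le\max(\reg A-1,\reg B)$ applied to your two exact sequences gives
\[
\reg(S/J_Q)-(n-1)\ \le\ \max(\reg N-1,0),\qquad \reg N\ \le\ \max(\reg D(\A)-1,0),
\]
so for $n\ge\ell+1$ the Jacobian bound follows from $\reg D(\A)\le n-\ell+1$ and conversely. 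Thus the ``genuine obstacle'' you identify is not an auxiliary lemma but the whole content of the result, and nothing in the proposal actually establishes it. The free-arrangement check at the end is a sanity check on the equivalence, not evidence that the stratification or local-duality approaches you mention would succeed.
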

\begin{rem}
The above result for the case of $\ell=3$ was proved earlier by Schenck in \cite[Corollary 3.5]{Sch}.
\end{rem}

\section{Generic hyperplanes}\label{sec:generic}

In this section we collect a number of lemmas about generic hyperplanes.
First, we consider a combinatorial notion of generic hyperplanes and subsequently relate it with a genericity conditions arising from the module of logarithmic derivations.

\begin{define}
	We call a hyperplane $H$ in $\CC^\ell$ \emph{generic with respect to a hyperplane arrangement} $\mathcal{A}$ if for all $0\neq X \in L(\A)$ it holds that 
	\[
		\codim(X\cap H)=\codim(X)+1.
	\]
\end{define}

%

One reason to consider generic hyperplanes stems from the fact that they already impose strong algebraic conditions.

\begin{lemma}\label{lem:comb_generic}
	Let $\A$ be an arrangement in $\CC^3$ and let $\B$ be the arrangement obtained by adding a generic hyperplane $H\notin \A$ with defining equation $\alpha_H$ to $\A$.
	Then it holds that:
	\begin{enumerate}
		\item The Euler sequence
		\[
		0 \rightarrow D(\A) \stackrel{\cdot \alpha_{H}}{\rightarrow }
		D(\B) \stackrel{\rho}{\rightarrow } D(\B^{H})\rightarrow 0.
		\]
		is exact.
		\item  For any minimal set of generators $\theta_E,\theta_2,\dots,\theta_s$ of $D(\A)$ it holds that $\theta_i(\alpha_H)\notin \alpha_H S$ for all $2\le i \le s$.
	\end{enumerate}
\end{lemma}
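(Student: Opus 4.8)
For part (1), the plan is to read $\B=\A\cup\{H\}$ as the middle term of the deletion–restriction triple with deletion $\B'=\A$ and restriction $\B^H$, and then to invoke Theorem~\ref{ADPTT}. Since the general Euler sequence $0\to D(\A)\xrightarrow{\cdot\alpha_H}D(\B)\xrightarrow{\rho}D(\B^H)$ is always left exact and exact in the middle, the only point at issue is the surjectivity of $\rho$. For this I would verify the numerical hypothesis of Theorem~\ref{ADPTT}: taking a minimal free resolution
\[
0\to\bigoplus_i S[-c_i']\to\bigoplus_i S[-d_i]\to D_0(\A)\to 0
\]
(which has this length because $D_0(\A)$ is reflexive over the three-dimensional ring $S$, hence of projective dimension at most one), I must check that $|\B^H|>c_i'-1$ for every $i$.

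Two estimates feed into this inequality. First, I would use the genericity hypothesis to show $|\B^H|=|\A|$. Indeed, if two distinct planes $K_1,K_2\in\A$ restricted to the same line $X=K_1\cap H=K_2\cap H$ inside $H$, then $X$ would be a rank-two flat contained in $H$ with $H,K_1,K_2\in\B_X$; these three share the line $X\neq\{0\}$, so $\codim(H\cap K_1\cap K_2)\le 2<3$, contradicting the combinatorial genericity of the localization $\B_X$. Hence all restricted lines are distinct and $|\B^H|=|\A|$. Second, every syzygy degree obeys $c_i'-1\le\reg D_0(\A)\le\reg D(\A)\le|\A|-2$, using the definition of Castelnuovo–Mumford regularity together with Proposition~\ref{MS} in the case $\ell=3$. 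Combining the two gives $c_i'-1\le|\A|-2<|\A|=|\B^H|$, so Theorem~\ref{ADPTT} applies and $\rho$ is surjective, proving~(1).

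For part (2), I would argue by contradiction: suppose some non-Euler minimal generator $\theta_i$ (so $\theta_i\in D_0(\A)$ with $2\le i\le s$ and $\deg\theta_i=d_i$) satisfies $\theta_i(\alpha_H)\in\alpha_H S$, which is exactly the condition $\theta_i\in D(\B)$. Now $\B^H$ is a central arrangement of $|\A|$ lines in the plane $H$, hence free with $\exp(\B^H)=(1,|\A|-1)$, and its degree-one generator is $\rho(\theta_E)$. Because $d_i\le|\A|-2<|\A|-1$, every element of $D(\B^H)$ of degree $d_i$ is an $S$-multiple of $\rho(\theta_E)$, so $\rho(\theta_i)=\overline{G}\,\rho(\theta_E)$ for some $G\in S$ of degree $d_i-1$. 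Then $\theta_i-G\theta_E\in\ker\rho$, and the exactness from part (1) yields $\theta_i-G\theta_E=\alpha_H\theta'$ for some $\theta'\in D(\A)$. Passing to the quotient $D(\A)/\mathfrak{m}D(\A)$, where $\mathfrak{m}=(x,y,z)$ and the classes of $\theta_E,\theta_2,\dots,\theta_s$ form a basis, the relation $\theta_i=G\theta_E+\alpha_H\theta'$ reduces (since $\alpha_H\in\mathfrak{m}$) to $\overline{\theta_i}=G_0\,\overline{\theta_E}$ with $G_0\in\CC$ the constant term of $G$; this contradicts the linear independence of the distinct basis vectors $\overline{\theta_E}$ and $\overline{\theta_i}$, whether or not $G_0$ vanishes.

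The technical heart is part (1), and within it the two bookkeeping estimates: extracting $|\B^H|=|\A|$ from the combinatorial genericity of the localizations, and controlling the second-syzygy degrees $c_i'$ by the regularity bound of Proposition~\ref{MS}. I expect the genericity computation to be the main obstacle, since it requires carefully unwinding the localization condition at rank-two flats of $H$; once right exactness of the Euler sequence is in hand, part~(2) is a short degree count combined with Nakayama's lemma. (The only degenerate situation, a degree-zero generator in the non-essential case, is handled identically: there $\rho(\theta_i)=0$ forces $\theta_i\in\alpha_H D(\A)$, which is impossible for degree reasons.)
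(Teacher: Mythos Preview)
Your proposal is correct and follows essentially the same route as the paper's proof: for (1) you invoke Theorem~\ref{ADPTT} after bounding the first-syzygy degrees via the regularity estimate of Proposition~\ref{MS} and reading off $|\B^H|=|\A|$ from combinatorial genericity; for (2) you assume $\theta_i\in D(\B)$, use the degree bound $d_i\le|\A|-2$ to subtract a multiple of $\theta_E$ so that the image under $\rho$ vanishes, and then derive a contradiction to minimality via exactness. The only cosmetic differences are that you spell out the flat-localization argument for $|\B^H|=|\A|$ (the paper simply asserts it) and phrase the minimality contradiction through $D(\A)/\mathfrak{m}D(\A)$ rather than by replacing $\theta_i$ with $\theta_i-f\theta_E$ in the generating set; neither changes the substance of the argument.
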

\begin{proof}
	Set $|\A|=:d$ and let 
	$$
	0 \rightarrow 
	\bigoplus_{j=1}^{n -3} S[-b_j]
	\rightarrow 
	\bigoplus_{i=1}^{n} S[-a_j]
	\rightarrow D(\A) \rightarrow 0
	$$
	be a minimal free resolution of $D(\A)$ with 
	$a_1 \le \ldots \le a_{n},\ 
	b_1 \le \ldots \le b_{n -2}$.
	Note that, $a_{n} \le d-2$ and 
	$b_{n -2} \le d-1$ since the Castelnouvo--Mumford regularity of $D(\A)$ is at most $d-2$, and this follows from \Cref{MS}.
	Since $H$ is generic with respect to $\A$, we know that $|\A^H|=d$, hence $|\A^H|>b_i$ for all $1\le i \le n-2$.
	Thus \Cref{ADPTT} implies that the Euler sequence
		\begin{equation}\label{eq:euler_sequence}
				0 \rightarrow D(\A) \stackrel{\cdot \alpha_{H}}{\rightarrow }
				D(\B) \stackrel{\rho}{\rightarrow } D(\B^{H})=S[-1]
				\oplus S[-d+1] \rightarrow 0.
			\end{equation}
			 is exact which proves (1).

	To prove (2), let $\theta_E=\theta_1,\theta_2,\dots,\theta_{s}$ be a minimal set of generators for $D(\A)$ with $\deg \theta_j=a_j$.
	
	We now show that $\theta_j \not \in D(\B)$ for all $j >1$.
	Assume that $\theta_j \in D(\B)$.
	The above regularity argument implies that $\deg \theta_j=a_j \le d-2$, so after replacing 
	$\theta_j$ by $\theta_j-f \theta_E$, if necessary, it holds that $\rho(\theta_j)=0$ for all $j>1$.
	Thus the exactness of the sequence~\eqref{eq:euler_sequence} implies that $\theta_j/\alpha_{H} \in D(\A)$, contradicting the minimality of the generators of $D(\A)$.
\end{proof}

\begin{lemma}\label{lem:surjectivity}
	Let $\A$ be an irreducible arrangement in $\CC^3$. Let $\A_\ell$ be the arrangement obtained from $\A$ by taking $(\ell-3)$-times the coning operation, i.e., $\A_\ell \subset \CC^\ell$ with new coordinates $x_4,\ldots,x_\ell$.
	Let $H\subset \CC^\ell$ be a generic hyperplane with respect to $\A_\ell$.
	Then for $\B:=\A_\ell \cup \{H\}$ the Euler restriction map
	\[
	\rho \widetilde{D(\B)} \twoheadrightarrow \widetilde{D(\B^H)}
	\]
	is a surjective map on the level of sheaves.
\end{lemma}
\begin{proof}
	Let $p$ be a prime ideal corresponding to some flat $0\neq X\in L(\B)$.
	We need to check that the localized map $\rho_X : D(\B_X)\to D(\B_X^H)$ is surjective.
	
	If $X\not \subset H$, we have $\B_X = (\A_\ell)_X$ which yields a surjective map $\rho_X$.
	Hence we assume $X\subset H$ and thus there exists $Y\in L(\A_\ell)$ such that $Y\cap H = X$.
	This means that $\B_X = (\A_\ell)_Y\cup \{H\}$ which is linearly isomorphic to the coning of  $(\A_\ell)_Y$ as $H$ is generic with respect to  $(\A_\ell)_Y$.
	
	After a change of coordinates, we can assume that
	 \[
	 	D((\A_\ell)_Y) = \langle \theta_1,\dots,\theta_t,\partial_{x_{s+1}},\dots, \partial_{x_\ell}\rangle
	 \]
	 with $\theta_i \in \Der \CC\left[ x_1,\dots,x_s\right]$ for $i=1,\dots,t$ and $H=\{x_{s+1}=0\}$.
	 As  $\B_X$ is the coning of $(\A_\ell)_Y$ with new hyperplane $H$ we get that
	 \begin{align*}
	 	D(\B_X) &= \langle \theta_1,\dots,\theta_t,x_{s+1}\partial_{x_{s+1}},\dots, \partial_{x_\ell}\rangle \\ 
	 	D(\B_X^H)& = \langle \theta_1,\dots,\theta_t,\partial_{x_{s+2}},\dots, \partial_{x_\ell}\rangle.
	 \end{align*}
	 As $\rho_X$ thus maps every generator except $x_{s+1}\partial_{x_{s+1}}$ to its obvious counterpart, this map is surjective.
\end{proof}

We also need an algebraic notion of generic hyperplanes:

\begin{define}
	Let $\mathcal{A}$ be a hyperplane arrangement in $\CC^\ell$ and let $\theta_E$, $\theta_2, \ldots, \theta_s$ be a minimal set of generators of $D(\mathcal{A})$.
	A hyperplane $H$ in $\CC^\ell$ with the defining equation $\alpha_H$ is \emph{algebraically generic} with respect to $\mathcal{A}$ and its generators $\theta_E,\theta_2,\dots,\theta_s$ if $\theta_i(\alpha_H) \notin S\alpha_H$ for all $i=2,\ldots,s$.
\end{define}

One can show that for an irreducible arrangement $\A$ and a fixed derivation $\theta \in D(\A)$ which is not a multiple of the Euler derivation, the set of hyperplanes $\{\alpha_H=0\}$ in the dual space $(\mathbb{P}^{\ell-1})^*$ such that $\theta(\alpha_H)\notin S \alpha_H$ is open and dense.
Since the intersection of finitely many open dense subsets is non-empty, algebraically generic hyperplanes always exist.
Since we do not use this fact below, we omit the proof of this statement.

The notion of an algebraically generic hyperplane might however depend on the chosen generators as the next example shows.

\begin{example}
	Let $\A$ be the deleted $A_3$ arrangement in $\CC^3$ with the defining equation $xyz(x-y)(x-z)=0$.
	One basis of $D(\A)$ is
	\[
	\theta_{1} : =\theta_E, \quad \theta_2:=y(x-y)\partial_y, \quad \theta_3:=z(x-z)\partial_z.
	\]
	The hyperplane $H:\{y-z=0\}$ is algebraically generic with respect to the generators $\theta_2$ and $\theta_3$.
	This hyperplane is however not algebraically generic with respect to the basis $\theta_E, \theta_2, \theta_2+\theta_3$ as $(\theta_2+\theta_3)(y-z)\in S(y-z)$.
\end{example}

In the case of a hyperplane arrangement in rank $3$, a generic hyperplane is also algebraically generic for any minimal set of generators.
\begin{lemma}\label{lem:comb_gen_3}
	Let $\A$ be an essential arrangement in $\CC^3$ and let $H\notin \A$ be a generic hyperplane with respect to $\A$.
	Let $\theta_E,\theta_2,\dots,\theta_s$ be a minimal set of generators of $D(\A)$.
	Then $H$ is also algebraically generic with respect to $\A$ and the generators $\theta_E,\theta_2,\dots,\theta_s$.
\end{lemma}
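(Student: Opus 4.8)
The plan is to observe that the conclusion, once unwound, is literally the condition defining algebraic genericity: $\theta_i(\alpha_H)\notin S\alpha_H$ for every $i=2,\dots,s$. This is exactly the content of part~(2) of \Cref{lem:comb_generic}, which is established there for an \emph{arbitrary} minimal generating system of $D(\A)$ and for any arrangement in $\CC^3$. Hence my first move is simply to invoke \Cref{lem:comb_generic}(2). In particular the essentiality of $\A$ is not genuinely used; it only serves to keep us in the geometrically relevant situation, and the real point of the statement is the \emph{independence of the chosen generators}, in contrast to the preceding example where a bad choice of basis destroyed algebraic genericity.

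Should I want a self-contained argument rather than quoting \Cref{lem:comb_generic}(2), I would reconstruct its mechanism. First, combinatorial genericity forces $|\A^H|=|\A|=:d$: if two hyperplanes of $\A$ met $H$ in the same line $X\subseteq H$, the localization at $X$ would contain three planes through a common line and hence fail to be combinatorially generic. Consequently $\B^H$ is a central arrangement of $d$ lines in $H\cong\CC^2$, so it is free with $D(\B^H)=S[-1]\oplus S[-(d-1)]$. Next, the regularity bound $\reg D(\A)\le d-2$ of \Cref{MS} keeps the syzygy degrees of $D(\A)$ strictly below $|\A^H|$, so \Cref{ADPTT} yields the right exactness of the Euler sequence
\[
0\to D(\A)\stackrel{\cdot\alpha_H}{\longrightarrow} D(\B)\stackrel{\rho}{\longrightarrow} D(\B^H)\to 0.
\]

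The final step is a minimality argument. Assume for contradiction that $\theta_i(\alpha_H)\in S\alpha_H$ for some $i\ge2$; this says precisely that $\theta_i\in D(\B)$. The regularity bound forces $\deg\theta_i\le d-2$, which is strictly below the top degree $d-1$ of $D(\B^H)$, so $\rho(\theta_i)$ cannot involve the top generator and must be a multiple of $\rho(\theta_E)$. Subtracting the corresponding multiple $f\theta_E$ gives $\rho(\theta_i-f\theta_E)=0$, and exactness then produces $\eta\in D(\A)$ with $\theta_i-f\theta_E=\alpha_H\eta$ and $\deg\eta<\deg\theta_i$. Expanding $\eta$ in the generators and using that $\alpha_H$ has positive degree (graded Nakayama), I would exhibit $\theta_i$ as a combination of the remaining generators, contradicting minimality. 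I expect this degree bookkeeping to be the only real obstacle: one must verify that the regularity bound leaves a genuine gap below the top exponent $d-1$ of $D(\B^H)$, since it is exactly this gap that prevents $\rho(\theta_i)$ from hitting the top generator and makes a single subtraction of a multiple of $\theta_E$ enough to kill it.
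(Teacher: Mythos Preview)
Your proposal is correct and takes exactly the same approach as the paper: the paper's proof of \Cref{lem:comb_gen_3} is the single sentence ``This follows immediately from Lemma~\ref{lem:comb_generic}~(2).'' Your optional self-contained reconstruction is likewise just a recap of the proof of \Cref{lem:comb_generic} itself, with the graded Nakayama step spelled out a bit more explicitly than in the paper.
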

\begin{proof}
This follows immediately from Lemma~\ref{lem:comb_generic} (2).	
\end{proof}

In the scenario of successive conings, we deduce the following corollary.

\begin{cor}\label{cor:generic}
	Let $\mathcal{A}$ be an arrangement of hyperplanes in $\CC^3$.
	Suppose that $\theta_E,\theta_2,\ldots,\theta_s$ are minimal generators of $D(\A)$.
	For $k=3,\dots,\ell$, let $\A_k$ be the arrangement obtained from $\A$ by taking $(k-3)$-times the coning operation.
	Let $H$ be a generic hyperplane with respect to $\A_\ell$ in $\mathbb{C}^\ell$ and set $H_k:= H \cap \bigcap_{j=k+1}^{\ell} \{x_j=0\}$ for $k=3,\dots,\ell$.
	Then it holds that
		\begin{enumerate}
			\item[(i)] $H_k$ is generic with respect to $\A_k$ for all $k=3,\dots,\ell$.
			\item[(ii)] $H_k$ is algebraically generic with respect to $\A_k$ and its minimal set of generators $\theta_E,\theta_2,\dots,\theta_s,x_4\partial_{x_4},\dots,x_k \partial_{x_k}$ for all $k=3,\dots,\ell$.
		\end{enumerate}
\end{cor}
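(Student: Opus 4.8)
The plan is to establish (i) first, by descending induction on $k$ starting from the base case $k=\ell$ (which is exactly the hypothesis that $H=H_\ell$ is combinatorially generic with respect to $\mathcal{A}_\ell$), and then to deduce (ii) from (i) by treating the two families of generators separately. Throughout I use the reading of combinatorial genericity in which the localization $(\mathcal{A}\cup\{H\})_X$ is tested in the full ambient space; the point is that this forces every flat $X\subseteq H$ to carry exactly $\codim X$ hyperplanes, in general position. This reformulation is the crucial input to the $c_i\neq 0$ argument below, and it is what makes the restriction step in (i) transparent.

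For the inductive step of (i) the key observation is that un-coning is a restriction. Writing $\mathcal{A}_k=c\mathcal{A}_{k-1}$, restriction to the coning hyperplane $\{x_k=0\}\in\mathcal{A}_k$ recovers $\mathcal{A}_{k-1}$, and since $H_{k-1}=H_k\cap\{x_k=0\}$ one checks directly that
\[
(\mathcal{A}_k\cup\{H_k\})^{\{x_k=0\}}=\mathcal{A}_{k-1}\cup\{H_{k-1}\}.
\]
I will use two routine facts. First, localization commutes with this restriction: for a flat $Y\subseteq\{x_k=0\}$ one has $(\mathcal{A}_{k-1}\cup\{H_{k-1}\})_Y=\big((\mathcal{A}_k\cup\{H_k\})_Y\big)^{\{x_k=0\}}$, because $Y\subseteq K\cap\{x_k=0\}$ is equivalent to $Y\subseteq K$. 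Second, the restriction of a combinatorially generic arrangement to one of its members is again combinatorially generic: if $\mathcal{G}$ is combinatorially generic in $\CC^m$ and $K_0\in\mathcal{G}$, then $p\le m-1$ of the hyperplanes $K\cap K_0$ meet along $K_1\cap\dots\cap K_p\cap K_0$, which has codimension $p+1$ in $\CC^m$ as $K_1,\dots,K_p,K_0$ are $p+1\le m$ members of $\mathcal{G}$, hence codimension $p$ in $K_0$; the case $p>m-1$ is analogous. Granting these, any flat $0\neq Y\subseteq H_{k-1}$ of $\mathcal{A}_{k-1}\cup\{H_{k-1}\}$ satisfies $Y\subseteq H_k$, so by the inductive hypothesis $(\mathcal{A}_k\cup\{H_k\})_Y$ is combinatorially generic; restricting to $\{x_k=0\}$ shows $(\mathcal{A}_{k-1}\cup\{H_{k-1}\})_Y$ is combinatorially generic, which is precisely the genericity of $H_{k-1}$ with respect to $\mathcal{A}_{k-1}$.

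For (ii) recall from the coning-generator formula that the generators split into $\theta_2,\dots,\theta_s$ (coming from $\mathcal{A}$, involving only $x_1,x_2,x_3$) and the coning generators $x_4\partial_{x_4},\dots,x_k\partial_{x_k}$. For the first family I reduce to rank three: by (i) with $k=3$ the hyperplane $H_3$ is combinatorially generic with respect to the essential arrangement $\mathcal{A}$, so Lemma \ref{lem:comb_gen_3} gives $\theta_j(\alpha_{H_3})\notin S_3\,\alpha_{H_3}$ for $j=2,\dots,s$, where $S_3=\CC[x_1,x_2,x_3]$. Since $\theta_j$ annihilates $x_4,\dots,x_k$ one has $\theta_j(\alpha_{H_k})=\theta_j(\alpha_{H_3})$, and specializing $x_4=\dots=x_k=0$ in a putative identity $\theta_j(\alpha_{H_k})=P\,\alpha_{H_k}$ would force $\theta_j(\alpha_{H_3})\in S_3\,\alpha_{H_3}$, a contradiction; hence $\theta_j(\alpha_{H_k})\notin S_k\,\alpha_{H_k}$. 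For the second family, writing $\alpha_{H_k}=\sum_{m=1}^k c_mx_m$ we have $x_i\partial_{x_i}(\alpha_{H_k})=c_ix_i$, so I must show $c_i\neq0$ and $\alpha_{H_k}\nmid x_i$. Here I use that, since $\mathcal{A}$ is essential, for $4\le i\le k$ the $x_i$-axis $L_i$ is exactly the intersection of all members of $\mathcal{A}_k$ other than $\{x_i=0\}$, a flat of codimension $k-1$. Were $c_i=0$, then $L_i\subseteq H_k$, and the localization of $\mathcal{A}_k\cup\{H_k\}$ at $L_i$ would consist of the $|\mathcal{A}_k|-1$ members of $\mathcal{A}_k$ through $L_i$ together with $H_k$, that is $|\mathcal{A}_k|=|\mathcal{A}|+k-3\ge k$ hyperplanes through a flat of codimension $k-1$, contradicting the combinatorial genericity of $H_k$ established in (i). The same argument applied to the flat $\{x_4=\dots=x_k=0\}$ shows $(c_1,c_2,c_3)\neq0$; together with $c_i\neq0$ this gives that $\alpha_{H_k}$ has at least two nonzero coefficients, whence $\alpha_{H_k}\nmid c_ix_i$ and $x_i\partial_{x_i}(\alpha_{H_k})\notin S_k\,\alpha_{H_k}$.

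The main obstacle is not any single estimate but the careful bookkeeping of the coning structure: one must verify that un-coning is literally restriction to $\{x_k=0\}$, that localization commutes with this restriction, and that the distinguished flats $L_i$ and $\{x_4=\dots=x_k=0\}$ are genuine flats of $\mathcal{A}_k$ with the claimed hyperplane counts. Each of these hinges on $\mathcal{A}$ being essential, so that $\bigcap_{H\in\mathcal{A}}H=\{0\}$ in $\CC^3$; this is what guarantees that the extended members of $\mathcal{A}$ all pass through $L_i$ and cut it out together with the coordinate hyperplanes. I expect the $c_i\neq0$ step to be the most delicate point, since it is exactly where the full-space reading of combinatorial genericity is indispensable and where a naive, quotient-based reading would fail.
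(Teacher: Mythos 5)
Your proof is correct, and its overall strategy coincides with the paper's: reduce everything to the combinatorial genericity of $H$ in $\CC^\ell$ by cutting with the coning hyperplanes $\{x_j=0\}$, then invoke Lemma~\ref{lem:comb_gen_3} for the derivations $\theta_2,\dots,\theta_s$ and combinatorial genericity for the derivations $x_j\partial_{x_j}$. The differences are in packaging and in the level of detail. For (i), the paper argues in one step that $\codim_{\CC^k}(H_k\cap L_1\cap\dots\cap L_p)$ equals $\codim_{\CC^\ell}(H\cap\{x_{k+1}=0\}\cap\dots\cap\{x_\ell=0\}\cap L_1\cap\dots\cap L_p)$ and reads off the result from the genericity of $H$, whereas you run a descending induction using that un-coning is restriction to $\{x_k=0\}$, that localization commutes with this restriction, and that restricting a combinatorially generic arrangement to one of its members preserves genericity; this is a step-by-step version of the same codimension count, and both are valid. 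For (ii), your treatment of the $\theta_i$ (reduce to $H_3$ via $\theta_i(\alpha_{H_k})=\theta_i(\alpha_{H_3})$ and specialize $x_4=\dots=x_k=0$) is exactly the paper's argument with the specialization made explicit. Where you go beyond the paper is the final step: the paper merely asserts that combinatorial genericity of $H_k$ forces $x_j\partial_{x_j}(\alpha_{H_k})\notin S\alpha_{H_k}$, while you actually prove it by exhibiting the flats $L_i$ and $\{x_4=\dots=x_k=0\}$ and counting hyperplanes through them to conclude $c_i\neq 0$ and $(c_1,c_2,c_3)\neq 0$. This does use essentiality of $\A$, which the corollary does not state but which is needed anyway to invoke Lemma~\ref{lem:comb_gen_3} and is satisfied in all applications; flagging it is a genuine improvement in rigor rather than a gap.
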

\begin{proof}
	Let $0\neq X\in L(\A_k)$ be  a flat in $\A_k$.
	By construction, $X$ is then also a flat in $\A_\ell$, hence $\codim(X\cap H)=\codim(X)+1$ in $\CC^\ell$.
	The same equation then also holds in $\CC^k$ and thus $H_{k}$ is also generic with respect to $\A_k$.
	

	For the second claim, suppose $H_k$ is defined by the equation $\alpha_{H_k}$.
	Since the derivations $\theta_i$ only involve the variables $x_1,x_2,x_3$ by construction, it follows that $\theta_i(\alpha_{H_k})=\theta_i(\alpha_{H_3})$ for all $i=2,\dots,s$.
	Hence  Lemma~\ref{lem:comb_gen_3} implies that $H_k$ is algebraically generic with respect to $\theta_2,\dots,\theta_s$.
	Lastly note that  $H_k$ is algebraically generic with respect to $x_4\partial_{x_4},\dots,x_k \partial_{x_k}$ since $H_k$ is combinatorially generic with respect to $\A_k$.
\end{proof}

\section{Ziegler pairs in \texorpdfstring{$\mathbb{C}^{3}$}{}}\label{sec:dim3}
Let us present our first main result of the paper.

\begin{theorem}\label{main}
Let \(\mathcal{A}_1,\mathcal{A}_2\) be central arrangements in \(\mathbb C^3\) with
\(L(\mathcal{A}_1)\cong L(\mathcal{A}_2)\) and \(\exp(\mathcal{A}_1)\ne \exp(\mathcal{A}_2)\).
Let \(\mathcal{B}_i=\mathcal{A}_i\cup\{H\}\), where \(H\) is a generic hyperplane, and assume \(L(B_1)\cong L(B_2)\). Then \(B_1,B_2\) form a Ziegler pair.
\end{theorem}

The proof of this result relies on a detailed understanding of the free resolution of the derivation module of the arrangement after adding a generic hyperplane.

\begin{lemma}\label{lem:generic_addition}
	Let $\A$ be an arrangement in $\CC^3$ and let $\B$ be the arrangement obtained by adding a generic hyperplane $H$ to $\A$.
	If $\A$ has exponents $\exp(\A)=(1,a_2,\dots,a_n)$ then the exponents of $\B$ are 
	\[
	\exp(\B)=(1,a_2+1,\dots,a_n+1,|\A|-1).
	\]
\end{lemma}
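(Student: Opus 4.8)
The plan is to feed the exact Euler sequence of \Cref{lem:comb_generic}(1) into a Betti-number computation and read off a minimal generating set of $D(\B)$ directly. Set $d:=|\A|$. Because $H$ is combinatorially generic, the traces $K\cap H$ for $K\in\A$ are pairwise distinct, so $\B^H=\A^H$ is an arrangement of $d$ lines through the origin in $H\cong\CC^2$; every rank-two arrangement is free, hence $D(\B^H)\cong \bar S[-1]\oplus \bar S[-(d-1)]$ where $\bar S=S/(\alpha_H S)$. Let $\theta_E,\theta_2,\dots,\theta_n$ be minimal generators of $D(\A)$ of degrees $1,a_2,\dots,a_n$, and let $\psi\in D(\B)$ be a homogeneous lift (via the surjection $\rho$) of the degree-$(d-1)$ generator of $D(\B^H)$. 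I claim that $\theta_E,\psi,\alpha_H\theta_2,\dots,\alpha_H\theta_n$ is a minimal generating set of $D(\B)$, with degrees $1,d-1,a_2+1,\dots,a_n+1$, which is exactly the asserted $\exp(\B)$.

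Generation is the routine half. For homogeneous $\theta\in D(\B)$, expand $\rho(\theta)$ in the free $\bar S$-basis of $D(\B^H)$; since $\rho(\theta_E)$ and $\rho(\psi)$ are precisely this basis, subtracting a suitable $S$-combination of $\theta_E$ and $\psi$ arranges $\rho(\theta)=0$. By exactness the remainder lies in $\ker\rho=\alpha_H\,D(\A)$, hence equals $\alpha_H$ times an $S$-combination of $\theta_E,\theta_2,\dots,\theta_n$; absorbing the $\alpha_H\theta_E$ term into $S\cdot\theta_E$ then writes $\theta$ in terms of the proposed generators.

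The real work, and the step I expect to be the main obstacle, is minimality: I must rule out that any of these $n+1$ elements is superfluous. My plan is to compute $D(\B)\otimes_S\CC$ from the long exact $\operatorname{Tor}^S(-,\CC)$-sequence of the Euler sequence, which yields the exact strand
$$\operatorname{Tor}_1^S(D(\B^H),\CC)\xrightarrow{\ \delta\ }D(\A)[-1]\otimes_S\CC\xrightarrow{\ \mu\ }D(\B)\otimes_S\CC\longrightarrow D(\B^H)\otimes_S\CC\longrightarrow 0.$$
Resolving $\bar S$ over $S$ by $0\to S[-1]\xrightarrow{\alpha_H}S\to\bar S\to 0$ shows $\operatorname{Tor}_1^S(D(\B^H),\CC)$ is concentrated in degrees $2$ and $d$, while $D(\B^H)\otimes_S\CC$ sits in degrees $1$ and $d-1$. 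The regularity bound $\reg D(\A)\le d-2$ of \Cref{MS} forces each $a_i\le d-2$, so $D(\A)[-1]\otimes_S\CC$ vanishes in degree $d$; hence $\delta$ can only act in degree $2$, where its source is one-dimensional, giving $\dim\ker\mu\le 1$ there and $\ker\mu=0$ otherwise. Since $\theta_E\in D(\B)$ we have $\alpha_H\theta_E\in\mathfrak{m}\,D(\B)$, so the shifted Euler class already lies in $\ker\mu$ and, by the bound, exhausts it. Consequently $\mu$ is injective after discarding only the single degree-$2$ Euler slot, and the sequence computes $\dim_\CC D(\B)\otimes_S\CC=(n-1)+2=n+1$, with graded pieces in degrees $a_2+1,\dots,a_n+1$ (from $\operatorname{im}\mu$) and $1,d-1$ (from $D(\B^H)$). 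As the $n+1$ elements exhibited above already generate $D(\B)$ and realize exactly these degrees, they must form a minimal generating set, proving $\exp(\B)=(1,a_2+1,\dots,a_n+1,d-1)$. The delicate point is precisely this degree-$2$ bookkeeping: the one-dimensionality of $\operatorname{Tor}_1^S(D(\B^H),\CC)_2$ prevents any $\alpha_H\theta_i$ with $a_i=1$ from being cancelled along with $\theta_E$, and \Cref{lem:comb_generic}(2), which guarantees $\theta_i\notin D(\B)$ for $i\ge 2$, confirms that each $\alpha_H\theta_i$ is a genuinely new generator rather than a trivial multiple like $\alpha_H\theta_E$.
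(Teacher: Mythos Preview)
Your proof is correct and complete; the generation step and the $\operatorname{Tor}$ computation both go through, and the key regularity bound $a_i\le d-2$ from \Cref{MS} is exactly what you need to force the connecting map $\delta$ to vanish in degree $d$.

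Your route to minimality is genuinely different from the paper's. The paper argues minimality by hand: removing $\phi$ (your $\psi$) would make the image of $\rho$ lie in $\bar S\cdot\overline{\theta_E}$, contradicting surjectivity; removing some $\alpha_H\theta_j$ yields a relation in which the coefficient of $\phi$ must vanish by degree reasons, and then dividing through by $\alpha_H$ produces a forbidden relation among the minimal generators of $D(\A)$. You instead compute $\dim_\CC\bigl(D(\B)\otimes_S\CC\bigr)$ via the long exact $\operatorname{Tor}$-sequence, pinning down $\ker\mu$ as the single degree-$2$ class of $\theta_E$ and reading off the graded Betti numbers directly. Your approach is cleaner in that it handles all generators at once and outputs the graded structure of $D(\B)\otimes_S\CC$ without case analysis; it also makes transparent why \Cref{lem:comb_generic}(2) is, strictly speaking, not needed for this lemma (it is a consequence of the one-dimensionality of $\operatorname{Tor}_1^S(D(\B^H),\CC)_2$, as you note). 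The paper's argument, on the other hand, is more elementary and avoids any sheaf or $\operatorname{Tor}$ machinery beyond the exactness already supplied by \Cref{lem:comb_generic}(1), which may be preferable for readers less comfortable with homological bookkeeping.
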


\begin{proof}
Set $d:=|\A|$. Following Lemma~\ref{lem:comb_generic} (1), there is an exact sequence
\begin{equation}\label{eq:euler_sequence_4}
0 \rightarrow D(\A) \stackrel{\cdot \alpha_{H}}{\rightarrow }
D(\B) \stackrel{\rho}{\rightarrow } D(\B^{H})=S[-1]
\oplus S[-d+1] \rightarrow 0.
\end{equation}

This sequence enables us to determine the degrees of a minimal set of generators for $D(\B).$
Let $\theta_E=\theta_1,\theta_2,\ldots,\theta_{n}$ be a minimal set of generators for $D(\A)$ with $\deg \theta_j=a_j$.
Let $\overline{\theta_E},\varphi$ be a basis for $D(\B^{H})$ with $\deg \varphi=d-1$.
Note that $\overline{\theta_E}=\rho(\theta_E)$.
Since $\rho $ is surjective, there exists $\phi \in D(\B)_{d-1}$ such that 
$\rho(\phi)=\varphi$.

By Lemma~\ref{lem:comb_generic} (2), we obtain that $\theta_j \not \in D(\B)$ for all $j >1$.

Hence, we know that $\theta_E,\alpha_{H} \theta_2,\ldots,\alpha_{H} \theta_{n},\phi \in D(\B)$, and since the sequence~\eqref{eq:euler_sequence_4} is exact we can deduce that these derivations actually generate $D(\B)$.

Let us now show that they form in fact a minimal set of generators for $D(\B)$.
If they generate $D(\B)$ after removing $\phi$, then the image of $\rho$ consists of only $\overline{S} \overline{\theta_E}$, so $\rho$ cannot 
be surjective.
Next, assume that $\alpha \theta_j$ is removable for some $j>1$, where $\alpha:=\alpha_{H}$.
There are $f,f_k,g\in S$ such that
$$
\alpha \theta_{j}=f \theta_E+\sum_{k \neq j} f_k \alpha \theta_k +g \phi.
$$
Since $\deg \alpha\theta_j\le  d-1=\deg \phi$, it holds that $g\in \CC$.
If $g\neq 0$ we could generate $D(\B)$ without $\phi$ which is impossible by the above argument.
Hence it must hold that $g=0$ and thus
$$
\alpha \theta_{j}=f \theta_E+\sum_{k \neq j} f_k \alpha \theta_k.
$$
Since $\alpha \nmid \theta_E$ we must have that $\alpha \mid f$, which shows
$$
\theta_{j}=\frac{f}{\alpha} \theta_E+\sum_{k \neq j} f_k  \theta_k,
$$
 a contradiction to the minimality of the generating set $\{\theta_1, \ldots ,\theta_{n}\}$ for $D(\A)$.
Thus $\{\theta_E,\alpha_{H} \theta_2, \ldots ,\alpha_{H} \theta_{n},\phi\}$ is a minimal set of generators of $D(\B)$ and we obtain $\exp(\B)=(1,a_2+1,\dots,a_n+1,d-1)$ as claimed.
\end{proof}

\begin{proof}[Proof of~\Cref{main}]
\Cref{lem:generic_addition} implies that also set of exponents of $\B_1$ and $\B_2$ is different, hence these two arrangements form a Ziegler pair too.
\end{proof}
\begin{example}
\label{ziegclas}
Let us start with the classical Ziegler example. Consider the two line arrangements 
\begin{multline*}
\mathcal{A}_{1} : f_{1}=xy(x-y-z)(x-y+z)(2x+y-2z)(x+3y-3z)(3x+2y+3z) \\ \times (x+5y+5z)(7x-4y-z) =0 ,
\end{multline*}
\begin{multline*}
\mathcal{A}_{2}: f_{2}=xy(4x-5y-5z)(x-y+z)(16x+13y-20z)(x+3y-3z)\\ \times (3x+2y+3z) (x+5y+5z)(7x-4y-z)=0.
\end{multline*}
Geometrically both arrangements consist of $9$ lines that meet in $6$ triple points.
In $\A_1$ these $6$ triple points lie on a smooth conic and in the case of $\A_2$ they do not lie on a conic.
Thus we know that $L(\mathcal{A}_{1}) \cong L(\mathcal{A}_{2})$.
The exponents of these line arrangements are the following:
$${\rm exp}_{0}(\mathcal{A}_{1}) = (5,6,6,6), {\quad \rm exp}_{0}(\mathcal{A}_{2}) = (6,6,6,6,6,6).$$ Let $H : 13x+171y+232z=0$ and consider now $\mathcal{B}_{i} = \mathcal{A}_{i} \cup H$ with $i \in \{1,2\}$.
Since $H$ is generic with respect to both configurations $\mathcal{A}_{i}$, then $L(\mathcal{B}_{1}) \cong L(\mathcal{B}_{2})$, and by Theorem \ref{main} the exponents of these line arrangements are 
$${\rm exp}_{0}(\mathcal{B}_{1}) = (6,7,7,7,8), \quad {\rm exp}_{0}(\mathcal{B}_{2}) = (7,7,7,7,7,7,8).$$
\end{example}

\begin{example}
Let us consider a Ziegler pair with the property that for each line arrangement the first three exponents are the same. Consider 
\begin{multline*}
\mathcal{A}_{1} : f_{1}=xyz(x+2z)(y+z)(x+2y)(x+5y+3z)(x+y+3z)\\ \times (x+y+z)(3x+5y+3z)=0   
\end{multline*}
and 
\begin{multline*}
\mathcal{A}_{2} : f_{2}=xyz(x+y+z)(y+z)(x+(\sqrt{5}+2)z)(4x+5y+4z)(4x+(\sqrt{5}+3)y) \\ \times (x+y+(\sqrt{5}+3)z)(4x+(5\sqrt{5}+15)y+(4\sqrt{5}+12)z)=0.   
\end{multline*}
We know that the arrangements $\mathcal{A}_{1}$ and $A_{2}$ form a Ziegler pair since they have different minimal free resolutions of the derivation modules, namely
$${\rm exp}_{0}(\mathcal{A}_{1}) = (5,6,6), \quad {\rm exp}_{0}(\mathcal{A}_{2}) = (5,6,6,7),$$
see \cite{CP} for details regarding this example.
Let $H : x+17y+131z = 0$ and define $\mathcal{B}_{i} = \mathcal{A}_{i} \cup H$. Since $H$ is generic with respect to both arrangements $\mathcal{A}_{i}$, then $L(\mathcal{B}_{1}) = L(\mathcal{B}_{2})$, and the arrangements $\mathcal{B}_{1}$, $\mathcal{B}_{2}$ form a Ziegler pair with
$${\rm exp}_{0}(\mathcal{B}_{1}) = (6,7,7,9), \quad {\rm exp}_{0}(\mathcal{B}_{2}) = (6,7,7,8,9).$$
\end{example}
Using the techniques presented in Theorem \ref{main}, we can derive another result that characterizes Ziegler pairs via restrictions.
\begin{theorem}
Let $\A_i=\A_i' \cup \{H\}$ be an arrangement in $\CC^3$ with $i \in \{1,2\}$ and let 
$$
0 \rightarrow F_1^i \rightarrow F_0^i \rightarrow D(\A_i') \rightarrow 0
$$
be minimal free resolutions. Assume that $\A_1',\A_2'$ form a Ziegler pair. If $
F_0^i=\oplus S[-a_j^i],\ 
F_1^i=\oplus S[-b_j^i]$ and 
$\max \{a_j^i\}=a^i,\ \max \{b_j^i\}=b^i$, then $\A_1$ and $\A_2$ form a Ziegler pair if 
$|\A_i^{H}|>\max\{a^1,a^2,b^1,b^2\} $.
\label{genzpC3}
\end{theorem}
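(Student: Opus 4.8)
The plan is to read the minimal free resolution of $D(\A_i)$ off that of $D(\A_i')$ and to show that this passage is injective on resolutions once $m:=|\A_i^H|$ is large. To call $\A_1,\A_2$ a Ziegler pair we need both $L(\A_1)\cong L(\A_2)$ and $D(\A_1)\not\cong D(\A_2)$. The combinatorial equivalence is inherited from $L(\A_1')\cong L(\A_2')$ together with $H$ being added in the same combinatorial position; in particular $m=|\A_1^H|=|\A_2^H|$ is a common invariant of the pair. Thus the entire content is to prove $D(\A_1)\not\cong D(\A_2)$, and I would do this by comparing graded Betti numbers.

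First I would make the Euler restriction sequence right exact. Deleting the Euler summand $S[-1]$ from $F_0^i$ turns the given resolution into a minimal free resolution of $D_0(\A_i')$ whose first syzygy degrees are precisely the $b_j^i$; since $m=|\A_i^H|>b^i\ge b_j^i$ we have $|\A_i^H|>b_j^i-1$, so \Cref{ADPTT} yields
\[
0\to D(\A_i')[-1]\xrightarrow{\,\cdot\alpha_H\,} D(\A_i)\xrightarrow{\,\rho\,} D(\A_i^H)\to 0 .
\]
As $\A_i^H$ is a rank-two arrangement of $m$ lines it is free, $D(\A_i^H)\cong\overline S[-1]\oplus\overline S[-(m-1)]$ with $\overline S=S/\alpha_H S$, and resolving each summand by the Koszul complex of $\alpha_H$ gives it a minimal free resolution with generators in degrees $1,m-1$ and syzygies in degrees $2,m$.

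Next I would feed these two resolutions into the horseshoe lemma to build a free resolution of $D(\A_i)$ with $0$-th term $\bigoplus_j S[-a_j^i-1]\oplus S[-1]\oplus S[-(m-1)]$ and first term $\bigoplus_j S[-b_j^i-1]\oplus S[-2]\oplus S[-m]$. Exactly one cancellation is forced and universal, as in \Cref{lem:generic_addition}: the degree-two generator $\alpha_H\theta_E=\alpha_H\cdot\theta_E$ is redundant and pairs off with the degree-two syzygy coming from $\overline S[-1]$. The claim is then that
\[
0\to \bigoplus_j S[-b_j^i-1]\oplus S[-m]\longrightarrow S[-1]\oplus\bigoplus_{j\ge 2}S[-a_j^i-1]\oplus S[-(m-1)]\longrightarrow D(\A_i)\to 0
\]
is already minimal: the new generator $\phi$ of degree $m-1$ is irremovable because $\rho$ must be surjective, and each $\alpha_H\theta_j$ is irremovable by minimality of the generators of $D(\A_i')$, where $m>a^i$ guarantees that every shifted degree $a_j^i+1$ is at most $m$ and the coefficient bookkeeping of \Cref{lem:generic_addition} carries over.

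The hard part is the top internal degree $m$. When $a^i=m-1$ — which $m>a^i$ still permits — the shifted top generators $\alpha_H\theta_j$ live in degree $m$ alongside the new syzygy $S[-m]$, and a cancellation occurs exactly when a degree-$(m-1)$ generator of $D(\A_i')$ already lies in $D(\A_i)$ and lifts the second generator of $D(\A_i^H)$. Granting the displayed resolution one gets, away from degree $m$,
\[
\beta_{0,q}\bigl(D(\A_i)\bigr)=\beta_{0,q-1}\bigl(D(\A_i')\bigr)+[q=1]+[q=m-1]-[q=2],\qquad
\beta_{1,q}\bigl(D(\A_i)\bigr)=\beta_{1,q-1}\bigl(D(\A_i')\bigr)+[q=m],
\]
so the Betti table of $D(\A_i)$ determines that of $D(\A_i')$ in all internal degrees $\le m-2$. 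Since $\A_1',\A_2'$ form a Ziegler pair their Betti tables differ, and because all their generators and syzygies sit in degrees $\le\max\{a^i,b^i\}<m$, equality of the Betti tables of $D(\A_1)$ and $D(\A_2)$ would force agreement in every degree $\le m-2$ and then, the universal contributions in degrees $m-1$ and $m$ being common to both, agreement in the remaining degrees as well, contradicting the hypothesis. The one place where I expect genuine work is ruling out that the degree-$m$ cancellation differs between $\A_1$ and $\A_2$ so as to secretly equalise their Betti tables; the strict inequalities $m>\max\{a^1,a^2,b^1,b^2\}$ are exactly what confine the discrepancy of the Ziegler pair to a degree range that adding $H$ cannot erase. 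Once $D(\A_1)\not\cong D(\A_2)$ is secured, $\A_1$ and $\A_2$ form a Ziegler pair.
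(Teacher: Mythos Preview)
Your approach is essentially the paper's own: both apply \Cref{ADPTT} (the hypothesis $m=|\A_i^H|>b^i$ is exactly what that theorem needs) to make the Euler sequence right exact, and then run the minimality argument of \Cref{lem:generic_addition} to read off $\exp_0(\A_i)=(\exp_0(\A_i')+1,\,m-1)$, from which the Ziegler-pair conclusion is immediate. The paper's proof is a one-line reference (``the same proof as in \Cref{lem:comb_generic} works''), pointing only to the cohomology vanishing needed for exactness; your horseshoe formulation and Betti bookkeeping make the same argument explicit and are correct.

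The edge case you flag is genuine and is worth a remark. In \Cref{lem:generic_addition} the hyperplane is combinatorially generic, so $m=|\A'|$ and the regularity bound of \Cref{MS} forces $a^i\le m-2$; this is precisely what makes the coefficient $g$ of $\phi$ a constant in the minimality step. Under the weaker hypothesis $m>a^i$ of \Cref{genzpC3} one may have $a^i=m-1$, and then a top-degree generator $\theta_j$ of $D(\A_i')$ could already lie in $D(\A_i)$ and play the role of $\phi$, producing a cancellation in degree $m$. The paper's terse proof does not visibly treat this case either. Your argument \emph{is} complete in all degrees $\le m-1$: for $a_j^i+1\le m-1$ the coefficient $g$ vanishes for degree reasons, so the multiplicity of each $e\le m-1$ in $\exp_0(\A_i)$ equals the multiplicity of $e-1$ in $\exp_0(\A_i')$ plus $[e=m-1]$. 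Hence whenever $\exp_0(\A_1')$ and $\exp_0(\A_2')$ differ in some degree $\le m-2$ you are done outright; only the situation where the discrepancy of the Ziegler pair $(\A_1',\A_2')$ sits entirely in degree $m-1$ needs the extra care you anticipate.
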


\begin{proof}
 The same proof as in Lemma \ref{lem:comb_generic} works since the condition on $|\A_i^{H}|$ forces the vanishing
$$
H^2(\widetilde{D(\A_i')}(-1-b^i+|\A_i^{H}|-1))=0,
$$
which is the key observation in the proof of Lemma \ref{lem:comb_generic}.
\end{proof}

\medskip
\begin{example}
We will explain how to use the above result in the setting of Example \ref{ziegclas}.
Let us compute the minimal free resolutions of the derivation modules of the arrangements $\mathcal{A}_{1}$, $\mathcal{A}_{2}$, namely
$$0 \rightarrow S[-8] \oplus S[-7] \rightarrow S^{3}[-6]\oplus S[-5]\oplus S[-1] \rightarrow D(\mathcal{A}_{1})$$
and
$$0 \rightarrow S^{4}[-7] \rightarrow S^{6}[-6] \oplus S[-1] \rightarrow D(\mathcal{A}_{2}).$$
Since line $H : 13x+171y+232z=0$ is generic with respect to arrangements $\mathcal{A}_{1}$, $\mathcal{A}_{2}$, then $|\A_i^{H}| = 9$ for $i \in \{1,2\}$, and 
$$\max\{a^1,a^2,b^1,b^2\}= \max\{6,8,6,7\} = 8 < |\A_i^{H}|,$$
which confirms that the pair $\mathcal{B}_{i} = \mathcal{A}_{i} \cup \ell$ with $i \in \{1,2\}$ forms a Ziegler pair.
\end{example}

\section{Ziegler pairs in higher dimensions}\label{sec:general}

In this section we construct the first non-trivial Ziegler pairs in $\mathbb{C}^{\ell}$ starting with a Ziegler pair of hyperplane arrangements in $\mathbb{C}^{3}$.
The foundation of our construction is describing the structure of the generators of the derivation module obtained by coning and adding one generic hyperplane.

\begin{theorem}\label{thm:main_general}
	Let $\A$ be an irreducible arrangement in $\CC^3$. Let $\A_\ell$ be the arrangement obtained from $\A$ by taking $(\ell-3)$-times the coning operation, i.e., $\A_\ell \subset \CC^\ell$ with new coordinates $x_4,\ldots,x_\ell$.
	Let $H\subset \CC^\ell$ be a generic hyperplane with respect to $\A_\ell$.
	Then for $\B_\ell:=\A_\ell \cup \{H\}$ and $n:=|\A^{H_3}|-1$ we have
	$$
	\exp_0(\B_\ell)=((2)^{\binom{\ell-2}{2}},(\exp_0(\A)+ 1)^{\ell-2},n).
	$$

	\label{mainhigh}
\end{theorem}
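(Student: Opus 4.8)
The natural plan is induction on $\ell$. The base case $\ell=3$ is exactly \Cref{lem:generic_addition}: since $H$ is generic, the restriction $\A^{H_3}$ consists of $|\A|$ distinct points, so $n=|\A^{H_3}|-1=|\A|-1$ and $\exp_0(\B_3)=(\exp_0(\A)+1,\,n)$, which is the asserted formula with an empty $(2)$-block and a single copy of $\exp_0(\A)+1$. For the inductive step I would first record two structural inputs. Writing $\A_\ell=c\A_{\ell-1}$, the product/coning description of \Cref{deradd} shows that $D(\A_\ell)$ is minimally generated by the generators $\theta_E,\theta_2,\dots,\theta_s$ of $D(\A)$ (which involve only $x_1,x_2,x_3$) together with the $\ell-3$ coning derivations $x_4\partial_{x_4},\dots,x_\ell\partial_{x_\ell}$; in particular $\dim_\CC D(\A_\ell)_1=\ell-2$. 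Second, restricting the cone to the generic hyperplane drops the dimension by one in a controlled way: parametrizing $H$ by $x_1,\dots,x_{\ell-1}$ and solving for $x_\ell$ carries the coning form $x_\ell$ to a generic linear form while fixing all other defining forms, so that $\B_\ell^{H}=\A_\ell^{H}\cong\B_{\ell-1}$, with the image of $x_\ell$ cutting out precisely $H_{\ell-1}$. Consequently $D(\B_\ell^{H})\cong D(\B_{\ell-1})$, whose exponents are known by the inductive hypothesis.

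I would then study the Euler restriction sequence
\[
0\to D(\A_\ell)\xrightarrow{\ \cdot\alpha_H\ }D(\B_\ell)\xrightarrow{\ \rho\ }D(\B_\ell^{H})=D(\B_{\ell-1})\to 0,
\]
whose right exactness is the key analytic input. In $\CC^3$ this is \Cref{lem:comb_generic}(1); in arbitrary dimension I would deduce surjectivity of $\rho$ from the combinatorial genericity of $H$ (\Cref{cor:generic}(i)) together with a regularity bound in the spirit of \Cref{MS}, arranging the vanishing of the cohomology of $\widetilde{D(\A_\ell)}$ that obstructs lifting generators of $D(\B_{\ell-1})$ back to $D(\B_\ell)$. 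This generalization of \Cref{lem:comb_generic}(1) to higher dimensions is a genuine step, since $\A_\ell$ is not free.

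Granting exactness, a generating set of $D(\B_\ell)$ is assembled from three sources: the full Euler derivation $\theta_E=\sum_{i=1}^\ell x_i\partial_{x_i}$, which lifts the Euler derivation of $D(\B_{\ell-1})$ and stays in degree $1$; the $\alpha_H$-multiples of the generators of $D(\A_\ell)$; and lifts of the remaining minimal generators of $D(\B_{\ell-1})$. By \Cref{cor:generic}(ii) every $\theta_i$ with $i\ge 2$ and every $x_j\partial_{x_j}$ fails to lie in $D(\B_\ell)$, so each is forced into a minimal generating set only through its $\alpha_H$-multiple: the $\alpha_H\theta_i$ supply one fresh copy of the block $\exp_0(\A)+1$, the lifts propagate the earlier copies of this block and the top degree $n$ inherited from the base case, and the shifted coning derivations $\alpha_H x_j\partial_{x_j}$ contribute generators in degree $2$. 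This already accounts for the $(\exp_0(\A)+1)^{\ell-2}$ factor and the single $n$.

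The crux — and the step I expect to be hardest — is the exact multiplicity of $2$ in $\exp_0(\B_\ell)$. In degree $2$ the shifted coning derivations $\alpha_H x_j\partial_{x_j}$, the relation forced by $\theta_E=\sum_{i=1}^{3}x_i\partial_{x_i}+\sum_{j=4}^{\ell}x_j\partial_{x_j}$ (which makes $\alpha_H\theta_E$ a multiple of the degree-$1$ generator $\theta_E$, hence redundant, and imposes one linear relation among the shifted generators), and the lifts of the degree-$2$ generators of $D(\B_{\ell-1})$ all interact, and one must decide precisely which of these survive as minimal generators modulo $S_1\theta_E$. Tracking these cancellations along the induction — equivalently, identifying the redundancies in the iterated mapping cone of the successive Euler sequences — is what refines the naive degree-$2$ count into the binomial expression $(\ell-3)+\binom{\ell-4}{2}$, and it is precisely here that the genericity of $H$ must be exploited most carefully. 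Once this degree-$2$ bookkeeping is settled, collecting the three contributions yields the stated form of $\exp_0(\B_\ell)$.
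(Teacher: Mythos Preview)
Your overall strategy---induction on $\ell$ through the Euler restriction sequence, with base case \Cref{lem:generic_addition}---is exactly the paper's. You have correctly identified the generators of $D(\A_\ell)$, the identification $\B_\ell^{H}\cong\B_{\ell-1}$, and the need to establish surjectivity of $\rho$. (A minor point: for $k\ge 4$ the paper does not argue via a regularity bound but simply uses that line bundles on $\mathbb{P}^{k-1}$ have no intermediate cohomology, together with $\pd_S D(\A_k)\le 1$ and local freeness along the generic $H_k$.)

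The genuine gap is precisely where you flag it: the minimality argument. Your plan to ``lift generators of $D(\B_{\ell-1})$ abstractly and then prune redundancies in the mapping cone'' is not enough, because abstract lifts have no reason to interact cleanly, and the degree-$2$ bookkeeping you describe does not converge without further input. The paper's key idea, which you are missing, is to write down \emph{explicit} lifts
\[
\varphi_i^j:=x_j\theta_i-\theta_i(\alpha_H)\,x_j\partial_{x_j}\quad(2\le i\le s,\ 4\le j\le k),\qquad
\eta_i^j:=x_ix_j(\partial_{x_i}-\partial_{x_j})\quad(4\le i<j\le k),
\]
which together with $\theta_E$, the $\alpha_{H_k}\theta_i$, the $\alpha_{H_k}x_j\partial_{x_j}$, and a single lift $\phi_k$ of the top-degree generator form a set $G_k$. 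One then checks by direct computation that $\rho_k^{k-1}(G_k)=G_{k-1}$, which gives generation. For minimality the paper exploits a \emph{second} restriction, namely setting $x_k=0$ (not restricting to $H_k$): because the explicit lifts are built from monomials in the coning variables, a purported dependence relation in $G_k$ specializes under $x_k\mapsto 0$ to a dependence relation in $G_{k-1}$, contradicting the induction hypothesis. It is this pair of compatible restrictions, available only because the lifts are explicit, that replaces the vague ``cancellation tracking'' you propose. Without constructing $\varphi_i^j$ and $\eta_i^j$ (or something equivalent) your argument cannot be completed.
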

\begin{proof}
Let $\theta_E,\theta_2,\ldots,\theta_s$ be a minimal set of generators for $D(\A)$ with $\deg \theta_i=d_i$. Then it is clear that $D(\A_\ell)$ has a minimal set of generators 
$$
\theta_E,\theta_2,\ldots,\theta_s,x_4\partial_{x_4},\ldots,x_\ell\partial_{x_\ell}.
$$

For $k\ge 3$, we define $\A_k$ to be the $(k-3)$--times coned arrangement of~$\A$ and
$\B_k:=\A_k \cup \{H_k\}$.
By the construction of the coned arrangement and the generic hyperplane $H$, we have
\[
	\B_\ell^{H_k}=\B_k^{H_k} = \B_{k-1}.
\]
Let $\rho_k^{k-1}:D(\B_k)\to D(\B_k^{H_k})=D(\B_{k-1})$ be the Euler restriction map, and for $2\le j < i \le \ell$ we denote by $\rho_i^j:D(\B_i)\to D(\B_j)$ the composition of these maps.

\emph{Claim 1:} For every $3\le k \le \ell$, the map $\rho_k^{k-1}:D(\B_k)\to D(\B_{k-1})$ is surjective and therefore the following Euler sequence is exact:
\begin{equation}\label{eq:exact_euler}
0 \rightarrow D(\A_k)[-1] \stackrel{\cdot \alpha_{H_k}}{\rightarrow}
D(\B_k) \stackrel{\rho_k^{k-1}}{\rightarrow } D(\B_{k-1}) \rightarrow 0.
\end{equation}
\emph{Proof of Claim 1:}
The case $k=3$ was proved in~\Cref{main}.
Now assume $k>3$.
Since $\pd_S D(\A_k)\le 1$, this module has a free resolution
$$
0 \rightarrow 
\bigoplus_{j=1}^{p - k} S[-b_j]
\rightarrow 
\bigoplus_{j=1}^{p} S[-a_j]
\rightarrow D(\A_k) \rightarrow 0
$$
for some numbers $a_j, b_j$ and $p=s+k-3$.
Since we assume $k>3$, we know that $H^1(\O(-a_j))=0$ for all $1\le j \le p$ and $H^2(\O(-b_j))=0$ for all $1\le j \le p-k$.
Thus the long exact sequence in cohomology corresponding to the above free resolution shows that $H^1(\widetilde{D(\A_k)(-1)})=0$.

Lemma~\ref{lem:surjectivity} implies that there is an exact sequence of sheaves
\[
0 \rightarrow 
\widetilde{D(\A_k)}(-1) \stackrel{\cdot \alpha_{H_k}}{\rightarrow}
\widetilde{D(\B_k)} \stackrel{\rho_k^{k-1}}{\rightarrow}
\widetilde{D(\B_{k-1})} \rightarrow 0.
\]
The above cohomology-vanishing argument now shows that taking global sections preserves exactness and thus the map $\rho_k^{k-1}$ is surjective, which proves Claim 1.

The module $D(\B_\ell^{H_3})$ is a free module of rank $2$ with a basis $\theta_E,\phi_2$ with $\deg \phi_2 = n$.
Since the map $\rho_k^{k-1}$ is surjective for every $k\ge 3$ we can find successive preimages of $\phi_2$, namely $\phi_k\in D(\B_k)$ such that $\rho_k^2(\phi_k)=\phi_2$ for every $k\ge 3$.

After a change of coordinates, we may assume that $\partial_{x_j}(\alpha_H)=1$ for all $4\le j \le \ell$.
For $2\le i \le s$ and $4\le j\le \ell$, consider the derivation
$$
\varphi_i^j:=x_j \theta_i-\theta_i(\alpha_H) (x_j \partial_{x_{j}}).
$$
Then $\varphi_i^j(\alpha_H)=0$ for all $ 2 \le i \le s,\ 
4 \le j \le \ell$, and $\varphi_i^j \in D(\B_\ell)$ with $\deg \varphi_i^j=d_i+1$.
We also define the derivation for $4 \le i<j$
$$
\eta_i^j:=x_ix_j(\partial_{x_i}-\partial_{x_j}).
$$
Then clearly $\eta_i^j \in D(\B_k)$ for $j \le k$.

For $k\ge 3$, consider the set of derivations
\begin{multline*}
	G_k := \bigg\{\theta_E,\{\alpha_{H_k} x_j \partial_{x_j}\}_{4\le j \le k}, \{\alpha_{H_k} \theta_i\}_{i=2}^s, 
	\{ \varphi_i^j\}_{2 \le i \le s,\ 4\le j \le k}, \\ \{\eta_i^j\}_{4 \le i<j\le k}, \phi_k \bigg\}.
\end{multline*}

\emph{Claim 2:} For each $k\ge 3$, the set $G_k$ generates $D(\B_k)$.

\emph{Proof of Claim 2:} We proceed by induction, where the base case $k=3$ was again proved in~\Cref{main}.
Now assume the claim holds for some $3\le k-1$.

The derivations $\theta_i$ only involve the variables $x_1,x_2,x_3$, and by Corollary~\ref{cor:generic}, it holds that $\theta_i(\alpha_{H_k})\notin S\alpha_{H_k}$ for all $2 \le i \le s$ and $3\le k \le \ell$.
Therefore $\rho_k^{k-1}(\varphi_{i}^j)=\varphi_{i}^j \in D(\B_{k-1})$ for all $4< k, j \le \ell$ and $2 \le i \le s$. Now let us check the images of $\varphi_i^k$ and $\eta_i^k$ under the map $\rho:=\rho_k^{k-1}$.

First, we show that $\rho(\varphi_i^j)=\varphi_i^j$ if $j<k$, and 
$\rho(\varphi_i^k)=-\alpha_{H_{k-1}}\theta_i$.
For $j<k$, there are no $x_k$-terms in $\varphi_i^j$. Thus 
$\rho(\varphi_i^j)=\varphi_i^j$. Let $j=k$, now for $t<k$ 
\begin{eqnarray*}
\rho(\varphi_i^k)(x_t)&=&
x_k \overline{\theta_i(x_t)}\\
&=&-\alpha_{H_{k-1}}\overline{\theta_i(x_t)}=(-\alpha_{H_{k-1}} \theta_i)(x_t)
\end{eqnarray*}
since modulo $x_k$ does not change $\theta_i$. Moreover, 
\begin{eqnarray*}
\rho(\varphi_i^k)(x_k)&=&
-\overline{\theta_i(\alpha_H)} x_k\\
&=&-\alpha_{H_{k-1}}\theta_i(-\alpha_{H_{k-1}}).
\end{eqnarray*}
Since $\overline{x_k}=-\alpha_{H_{k-1}}$, we know that 
$\rho(\varphi_i^k)=-\alpha_{H_{k-1}}\theta_i$.

Next, let us show that $\rho(\eta_i^k)=-\alpha_{H_{k-1}} x_i\partial_{x_i}$.
Clearly $\rho(\eta_i^k)(x_t)=0$ if $t \neq i,k$. We compute 
\begin{eqnarray*}
\rho(\eta_i^k)(x_i)&=&=\overline{x_ix_k}\\
&=&-\alpha_{H_{k-1}} x_i=
(-\alpha_{H_{k-1}} x_i \partial_{x_i})(x_i).
\end{eqnarray*}
Furthermore,
\begin{eqnarray*}
\rho(\eta_i^k)(x_k)&=&\overline{x_i\alpha_{H_{k-1}}}\\
&=&\alpha_{H_{k-1}} x_i=
(-\alpha_{H_{k-1}} x_i \partial_{x_i})(-\alpha_{H_{k-1}}),
\end{eqnarray*}
which means that $\rho(\eta_i^k)=-\alpha_{H_{k-1}} x_i\partial_{x_i}$.
We have also $\rho(\eta_i^j)=\eta_i^j$ for  $4\le i < j <k$.
 Hence, by $\rho_k^{k-1}$, 
$$
\{\varphi_i^j\}_{2 \le i \le s,4 \le j \le k} \mapsto
\{\varphi_i^j,\alpha_{H_{k-1}} \theta_i\}_{2 \le i \le s,4 \le j \le k-1}
$$
and 
$$
\{\eta_i^k\}_{4 \le i \le k-1} \mapsto 
\{\alpha_{H_{k-1}} x_i \partial_{x_i}\}_{4 \le i \le k-1}
$$
up to signs, so we arrive at $\rho_k^{k-1}(G_k)=G_{k-1}$.
Using the exact sequence~\eqref{eq:exact_euler}, we obtain a generating set of $D(\B_k)$ by mapping the generators of $D(\A_k)$ and selecting preimages of the generators of $D(\B_{k-1})$ under the surjective map $\rho_k^{k-1}$.
The latter module is generated by $G_{k-1}$ by our induction assumption.

\emph{Claim 3:} The set $G_k$ is a minimal generating set of $D(\B_k)$. Since the degrees of the set $G_k$ for $k=\ell$ match the claimed exponents, this completes the proof the theorem once this claim is proved.

\emph{Proof of Claim 3:} We again proceed by induction where the base case $k=3$ was proved in~\Cref{main}.
Suppose that $G_{k-1}$ minimally generates $\B_{k-1}$ for some $3\le k-1$.

Since $\rho_k^{k-1}$ is surjective, we cannot remove 
$\theta_E$, $\varphi_i^j$ for $2 \le i \le s, 4 \le j \le k$, $\eta_i^j$ for $4\le i < j \le k$ or $\phi_k$.
Also, since $\A$ is irreducible, the derivations $\theta_2,\dots,\theta_s$ are all of degree at least $2$.
Thus the derivations $\varphi_i^j,\alpha_H \theta_i$  have degree at least $3$.
As $\A$ contains at least $4$ hyperplanes, also the derivation $\phi_k$ has degree at least $3$.

Suppose $x_t\alpha_{H_k} \partial_{x_t}$ is removable, then we have a relation involving the other derivations in $G_k$ of degree $2$, namely
\[
	x_t\alpha_H\partial_{x_t}=
	\sum_{i \neq t} c_ix_i\alpha_{H_k}\partial_{x_i}
	+\sum_{4\le i < j \le k} e_{ij} x_i x_j(\partial_{x_i}-\partial_{x_j}),
\]
where $c_i$ and $e_{ij}$ are constants.
Restricting this equation to $x_k=0$ yields a linear relation among the generators $G_{k-1}$, which is a contradiction to our induction assumption.

Moreover, suppose we can express $\alpha_{H_k}\theta_t$, for $1\le t \le s$, via the relation
\begin{equation}\label{eq:relation}
	\alpha_{H_k}\theta_t=f \theta_E+\sum_{i\neq t} f_i \alpha_{H_k} \theta_i+
	\sum_{\substack{1\le i \le s, \\4\le j \le k}}g_{ij} \varphi_i^j+\sum_{i=4}^k h_{i} x_i\alpha_{H_k} \partial_{x_i}+
	\sum_{4\le i < j \le k} q_{ij}\eta_i^j+
	c \phi_k,
\end{equation}
for polynomials $f, f_i, g_{ij},h_i,q_{ij}\in \mathbb{C}[x_1,\ldots,x_k]$.

The regularity argument from above tells us that for each $i=2,\ldots,s$ the degree of $\theta_i$ is at most $|\A|-2$.
As $\phi_k$ has degree $|\A|-1$, we can conclude that $c$ must be a constant.
Restricting this relation to $H_3$, i.e., $\alpha_{H_k}=\alpha_{H_{k-1}}=\dots = \alpha_{H_{3}}$, yields the relation
\[
0 = \overline{f} \theta_E + \overline{c} \phi_2.
\]
Since these two derivations form a basis of $D(\B_2)$ this yields $c=0$.


Note that, by construction, $\varphi_i^k|_{x_k=0}=0$  and $\varphi_i^j|_{x_k=0}=\varphi_i^j$ for $j<k$.
The same conclusion holds for the restriction of $\eta_i^j$.
 Furthermore, it holds that $\theta_i|_{x_k=0}=\theta_i$ and $\alpha_{H_k}|_{x_k=0}=\alpha_{H_{k-1}}$.
 Now restricting the relation~\eqref{eq:relation} onto $x_k=0$ yields
 \begin{align*}
 		\alpha_{H_{k-1}}\theta_t=&\overline{f} \theta_E+\sum_{i\neq t} \overline{f_i} \alpha_{H_{k-1}} \theta_i+\\
 	&\sum_{\substack{1\le i \le s, \\4\le j \le k-1}}g_{ij} \varphi_i^j+\sum_{i=4}^{k-1} \overline{h_{i}} x_i\alpha_{H_{k-1}} \partial_{x_i}+
 	\sum_{4\le i < j \le k-1} \overline{q_{ij}}\eta_i^j.
 \end{align*}
 This is impossible as these are all derivations in $G_{k-1}$ and hence form a minimal generating set of $D(\B_{k-1})$ by our induction hypothesis.
\end{proof}

\begin{cor}
\label{mainT}
	Let $\A_1$ and $\A_2$ be a Ziegler pair in $\CC^3$. Then there exists a Ziegler pair in $\CC^\ell$ determined by $\A_{1}$ and $\A_2$ such that each arrangement in the pair is irreducible.
	\label{highZiegler}
\end{cor}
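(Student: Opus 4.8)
The plan is to reduce Corollary \ref{mainT} to the degree computation provided by Theorem \ref{mainhigh}. Starting from the Ziegler pair $\A_1,\A_2$ in $\CC^3$, I would first arrange that both arrangements are irreducible. This is harmless: since $\A_1$ and $\A_2$ share the same intersection lattice and the classical Ziegler pairs (as in Example \ref{ziegclas}) consist of genuine line arrangements meeting in several multiple points, they are already irreducible, so I may assume $\A_1,\A_2\subset\CC^3$ are irreducible with $|\A_1|=|\A_2|=:n+1$ (the common size is forced by $L(\A_1)\cong L(\A_2)$). For each $i\in\{1,2\}$, form the $(\ell-3)$-fold coning $(\A_i)_\ell\subset\CC^\ell$ and choose a hyperplane $H$ that is combinatorially generic with respect to $(\A_i)_\ell$; set $(\B_i)_\ell:=(\A_i)_\ell\cup\{H\}$.

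Next I would verify that $(\B_1)_\ell$ and $(\B_2)_\ell$ have the same underlying matroid, i.e. $L((\B_1)_\ell)\cong L((\B_2)_\ell)$. This follows because coning is a combinatorial operation: by the lattice isomorphism \eqref{latadd} for products, $L((\A_i)_\ell)\cong L(\A_i)\times L(\text{Boolean})$, so $L((\A_1)_\ell)\cong L((\A_2)_\ell)$. Adding a \emph{combinatorially} generic hyperplane $H$ only introduces the generic incidences dictated by the matroid, which depend solely on the existing lattice and not on the field realization; hence $L((\B_1)_\ell)\cong L((\B_2)_\ell)$. Here I would invoke the genericity package of \Cref{cor:generic}, which guarantees that a single $H$ can be chosen combinatorially generic with respect to both coned arrangements simultaneously (choosing $H$ outside the finite union of the relevant non-generic conditions for $\A_1$ and $\A_2$).

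The heart of the argument is to show $D((\B_1)_\ell)\not\cong D((\B_2)_\ell)$, which I would extract directly from the exponent formula of Theorem \ref{mainhigh}. That theorem gives
\[
\exp_0((\B_i)_\ell)=\big((2)^{(\ell-3)+\binom{\ell-4}{2}},\,(\exp_0(\A_i)+1)^{\ell-2},\,n_i\big),
\]
where $n_i+1=|\A_i^{H_3}|=|\A_i|=n+1$, so $n_1=n_2=n$. Since $\A_1,\A_2$ form a Ziegler pair, their sets of exponents differ: $\exp_0(\A_1)\neq\exp_0(\A_2)$. The only part of the formula that depends on the original arrangement is the block $(\exp_0(\A_i)+1)^{\ell-2}$ (the count $(\ell-3)+\binom{\ell-4}{2}$ of $2$'s and the trailing $n$ are purely combinatorial and hence identical for both). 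Therefore the multisets $\exp_0((\B_1)_\ell)$ and $\exp_0((\B_2)_\ell)$ differ, which means the minimal free resolutions of $D((\B_1)_\ell)$ and $D((\B_2)_\ell)$ differ and the two modules are not isomorphic. Finally, irreducibility of $(\B_i)_\ell$ follows from the presence of the generic hyperplane $H$: a product decomposition would force $H$ into one factor, contradicting that $H$ meets the coned directions $x_4,\dots,x_\ell$ genuinely (this is exactly what the derivations $\eta_i^j$ in the proof of Theorem \ref{mainhigh} record), so $(\B_i)_\ell$ cannot split off a coordinate factor.

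The main obstacle I anticipate is the bookkeeping in the second step—confirming that adding the \emph{same} combinatorially generic $H$ to both coned arrangements preserves the lattice isomorphism. The subtlety is that combinatorial genericity is defined relative to each arrangement separately (\Cref{cor:generic} handles each $(\A_i)_\ell$), so I must argue that a generic choice of $H$ realizes precisely the lattice-forced incidences in both cases identically. This is where one must be careful that ``generic'' is a matroid-theoretic notion, independent of the field coordinates, and that the finitely many genericity conditions for $\A_1$ and $\A_2$ can be met by a single $H$ by a standard dimension count; everything else is a direct reading of the exponent formula.
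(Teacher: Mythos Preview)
Your proposal is correct and follows essentially the same approach as the paper: apply the construction of Theorem~\ref{mainhigh} to both $\A_i$, observe that coning plus adding a combinatorially generic hyperplane preserves the lattice isomorphism, and read off from the exponent formula that the degree sequences differ because $\exp_0(\A_1)\neq\exp_0(\A_2)$. The paper's proof is a two-line invocation of Theorem~\ref{mainhigh}, while you spell out the details it suppresses (irreducibility, the choice of a single $H$ generic for both coned arrangements, and why the lattices agree), but the strategy is identical.
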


\begin{proof}
	Since the $\A_i$'s are a Ziegler pairs in $\CC^3$, the hyperplane arrangements obtained by applying the construction in Theorem \ref{mainhigh} have the same combinatorics and different sequences of exponents.
\end{proof}

Concretely, we describe an irreducible Ziegler pair in $\CC^4$ stemming from our construction below.

\begin{example}
	Let us consider these arrangements of hyperplanes in $\mathbb{C}^{4}$:
	\begin{multline*}
		\mathcal{A}_{1} : f_{1} = xy(x-y-z)(x-y+z)(2x+y-2z)(x+3y-3z) \\ \times  (3x+2y+3z)(x+5y+5z)(7x-4y-z)w =0   
	\end{multline*}
	and 
	\begin{multline*}
		\mathcal{A}_{2} : f_{2} = xy(4x-5y-5z)(x-y+z)(16x+13y-20z) \\ \times (x+3y-3z)(3x+2y+3z)(x+5y+5z)(7x-4y-z)w =0.
	\end{multline*}
	These are the conings of the usual Ziegler of $9$ lines and thus we have
	\[
		\exp(\A_1) = (1,5,(6)^3), \quad \exp(\A_2) = (1,(6)^6).
	\]
	Let $H : x+13y+29z+42w = 0$ and consider $\mathcal{B}_{i} = \mathcal{A}_{i} \cup H$ with $i \in \{1,2\}$. We can see that $H$ is generic with respect to both arrangements and thus it holds that $L(\mathcal{B}_{1}) \cong L(\mathcal{B}_{2})$.
	Now Theorem~\ref{thm:main_general} implies ($n=8$ in this case):
	\begin{align*}
	\exp_{0}(\mathcal{B}_{1}) &= (2,(6)^2,(7)^6,8), \\
	\exp_{0}(\mathcal{B}_{2}) &= (2,(7)^{12},8).
	\end{align*}
	Hence we have constructed a non-trivial example of a Ziegler pair of two hyperplane arrangements in $\mathbb{C}^{4}$ since $\mathcal{B}_{1}, \mathcal{B}_{2}$ are irreducible with distinct degree sequences.
\end{example}

\section*{Acknowledgement}
We would like to thank the organizers of the workshop ``New Perspectives on Hyperplane Arrangements'' in May 2025 at Ruhr--Universität Bochum where this project was initiated.
We are moreover grateful to an anonymous referee for several helpful comments on an earlier version of the manuscript.

Takuro Abe is partially supported by JSPS KAKENHI Grant Numbers JP23K17298 and JP25H00399.

Lukas K\"uhne is funded by the Deutsche Forschungsgemeinschaft (DFG, German Research Foundation) – Project-ID 491392403 – TRR 358 and SPP 2458 -- 539866293.

Piotr Pokora is supported by the National Science Centre (Poland) Sonata Bis Grant \textbf{2023/50/E/ST1/00025}. For the purpose of Open Access, the author has applied a CC-BY public copyright license to any Author Accepted Manuscript (AAM) version arising from this submission.

\printbibliography

@article {Abe24,
	AUTHOR = {Abe, Takuro},
	TITLE = {Addition-deletion theorems for the {S}olomon-{T}erao
		polynomials and {$B$}-sequences of hyperplane arrangements},
	JOURNAL = {Math. Z.},
	FJOURNAL = {Mathematische Zeitschrift},
	VOLUME = {306},
	YEAR = {2024},
	NUMBER = {2},
	PAGES = {Paper No. 25, 18},
	ISSN = {0025-5874,1432-1823},
	MRCLASS = {32S22 (52C35)},
	MRNUMBER = {4688334},
	DOI = {10.1007/s00209-023-03426-z},
	URL = {https://doi.org/10.1007/s00209-023-03426-z},
}

@misc{ADP,
	title={A new hierarchy for complex plane curves}, 
	author={Takuro Abe and Alexandru Dimca and Piotr Pokora},
	year={2024},
	eprint={2410.11479},
	archivePrefix={arXiv},
	primaryClass={math.AG},
	url={https://arxiv.org/abs/2410.11479}, 
}

@article {Tearo14,
	AUTHOR = {Barakat, Mohamed and K\"uhne, Lukas},
	TITLE = {Computing the nonfree locus of the moduli space of
	arrangements and {T}erao's freeness conjecture},
	JOURNAL = {Math. Comp.},
	FJOURNAL = {Mathematics of Computation},
	VOLUME = {92},
	YEAR = {2023},
	NUMBER = {341},
	PAGES = {1431--1452},
	ISSN = {0025-5718,1088-6842},
	MRCLASS = {52C35 (05B35 14Q20 32S22 52C40)},
	MRNUMBER = {4550333},
	MRREVIEWER = {Clement\ Radu\ Popescu},
	DOI = {10.1090/mcom/3812},
	URL = {https://doi.org/10.1090/mcom/3812},
}

@article {Ziegler_Characteristic,
	AUTHOR = {Ziegler, G\"unter M.},
	TITLE = {Matroid representations and free arrangements},
	JOURNAL = {Trans. Amer. Math. Soc.},
	FJOURNAL = {Transactions of the American Mathematical Society},
	VOLUME = {320},
	YEAR = {1990},
	NUMBER = {2},
	PAGES = {525--541},
	ISSN = {0002-9947,1088-6850},
	MRCLASS = {32C40 (05B35)},
	MRNUMBER = {986703},
	MRREVIEWER = {Hiroaki\ Terao},
	DOI = {10.2307/2001687},
	URL = {https://doi.org/10.2307/2001687},
}

@incollection {Sch12,
	AUTHOR = {Schenck, Hal},
	TITLE = {Hyperplane arrangements: computations and conjectures},
	BOOKTITLE = {Arrangements of hyperplanes---{S}apporo 2009},
	SERIES = {Adv. Stud. Pure Math.},
	VOLUME = {62},
	PAGES = {323--358},
	PUBLISHER = {Math. Soc. Japan, Tokyo},
	YEAR = {2012},
	ISBN = {978-4-931469-67-9},
	MRCLASS = {52C35 (13D02 16E05 16S37 20F14)},
	MRNUMBER = {2933802},
	MRREVIEWER = {\c Stefan\ Ovidiu\ Toh\v aneanu},
	DOI = {10.2969/aspm/06210323},
	URL = {https://doi.org/10.2969/aspm/06210323},
}

@misc{KLP24,
	title={On the numerical Terao's conjecture and Ziegler pairs for line arrangements}, 
	author={Lukas Kühne and Dante Luber and Piotr Pokora},
	year={2024},
	eprint={2407.07070},
	archivePrefix={arXiv},
	primaryClass={math.AG},
	url={https://arxiv.org/abs/2407.07070}, 
}

@misc{CL25,
	title={Singular matroid realization spaces}, 
	author={Daniel Corey and Dante Luber},
	year={2025},
	eprint={2307.11915},
	archivePrefix={arXiv},
	primaryClass={math.AG},
	url={https://arxiv.org/abs/2307.11915}, 
}

@misc{BR24,
	title={Modular curves $X_1(n)$ as moduli spaces of point arrangements and applications}, 
	author={Lev Borisov and Xavier Roulleau},
	year={2024},
	eprint={2404.04364},
	archivePrefix={arXiv},
	primaryClass={math.AG},
	url={https://arxiv.org/abs/2404.04364}, 
}

@misc{BHKL25,
	title={Lorentzian polynomials and matroids over triangular hyperfields 1: Topological aspects}, 
	author={Matthew Baker and June Huh and Mario Kummer and Oliver Lorscheid},
	year={2025},
	eprint={2508.02907},
	archivePrefix={arXiv},
	primaryClass={math.CO},
	url={https://arxiv.org/abs/2508.02907}, 
}

@article{DST25,
	author = {DiPasquale, Michael and Sidman, Jessica and Traves, Will},
	title = {Geometric Aspects of the Jacobian of a Hyperplane Arrangement},
	journal = {International Mathematics Research Notices},
	volume = {2025},
	number = {13},
	pages = {rnaf172},
	year = {2025},
	month = {07},
	issn = {1073-7928},
	doi = {10.1093/imrn/rnaf172},
	url = {https://doi.org/10.1093/imrn/rnaf172},
	eprint = {https://academic.oup.com/imrn/article-pdf/2025/13/rnaf172/63657288/rnaf172.pdf},
}

@article {CP,
	AUTHOR = {Cuntz, Michael and Pokora, Piotr},
	TITLE = {Singular plane curves: freeness and combinatorics},
	JOURNAL = {Innov. Incidence Geom.},
	FJOURNAL = {Innovations in Incidence Geometry. Algebraic, Topological and
	Combinatorial},
	VOLUME = {22},
	YEAR = {2025},
	NUMBER = {1},
	PAGES = {47--65},
	ISSN = {2640-7337,2640-7345},
	MRCLASS = {14N20 (14N25 32S22 32S25)},
	MRNUMBER = {4914927},
	DOI = {10.2140/iig.2025.22.47},
	URL = {https://doi.org/10.2140/iig.2025.22.47},
}

@article {DS24,
	AUTHOR = {Dimca, Alexandru and Sticlaru, Gabriel},
	TITLE = {From {P}ascal's theorem to the geometry of {Z}iegler's line
	arrangements},
	JOURNAL = {J. Algebraic Combin.},
	FJOURNAL = {Journal of Algebraic Combinatorics. An International Journal},
	VOLUME = {60},
	YEAR = {2024},
	NUMBER = {4},
	PAGES = {991--1009},
	ISSN = {0925-9899,1572-9192},
	MRCLASS = {14H50 (13D02)},
	MRNUMBER = {4816058},
	DOI = {10.1007/s10801-024-01360-9},
	URL = {https://doi.org/10.1007/s10801-024-01360-9},
}

@misc{MS,
	title={Degeneration of pole order spectral sequences for hyperplane arrangements of 4 variables}, 
	author={Morihiko Saito},
	year={2019},
	eprint={1902.03838},
	archivePrefix={arXiv},
	primaryClass={math.AG},
	url={https://arxiv.org/abs/1902.03838}, 
}

@book {OT,
	AUTHOR = {Orlik, Peter and Terao, Hiroaki},
	TITLE = {Arrangements of hyperplanes},
	SERIES = {Grundlehren der mathematischen Wissenschaften [Fundamental
	Principles of Mathematical Sciences]},
	VOLUME = {300},
	PUBLISHER = {Springer-Verlag, Berlin},
	YEAR = {1992},
	PAGES = {xviii+325},
	ISBN = {3-540-55259-6},
	MRCLASS = {52B30 (14F35 20F36 20F55 32S25 57N65)},
	MRNUMBER = {1217488},
	MRREVIEWER = {Michel\ Yves\ Jambu},
	DOI = {10.1007/978-3-662-02772-1},
	URL = {https://doi.org/10.1007/978-3-662-02772-1},
}

@article {Sch,
	AUTHOR = {Schenck, Henry K.},
	TITLE = {Elementary modifications and line configurations in {$\Bbb
	P^2$}},
	JOURNAL = {Comment. Math. Helv.},
	FJOURNAL = {Commentarii Mathematici Helvetici},
	VOLUME = {78},
	YEAR = {2003},
	NUMBER = {3},
	PAGES = {447--462},
	ISSN = {0010-2571,1420-8946},
	MRCLASS = {32S22 (13N15 14J60 52C35 55R80)},
	MRNUMBER = {1998388},
	MRREVIEWER = {Daniel\ C.\ Cohen},
	DOI = {10.1007/s00014-003-0762-0},
	URL = {https://doi.org/10.1007/s00014-003-0762-0},
}

@article {Ziegler,
	AUTHOR = {Ziegler, G\"unter M.},
	TITLE = {Combinatorial construction of logarithmic differential forms},
	JOURNAL = {Adv. Math.},
	FJOURNAL = {Advances in Mathematics},
	VOLUME = {76},
	YEAR = {1989},
	NUMBER = {1},
	PAGES = {116--154},
	ISSN = {0001-8708,1090-2082},
	MRCLASS = {32C40 (05B35)},
	MRNUMBER = {1004488},
	MRREVIEWER = {P.\ Orlik},
	DOI = {10.1016/0001-8708(89)90045-5},
	URL = {https://doi.org/10.1016/0001-8708(89)90045-5},
}

Takuro Abe,
Department of Mathematics, 
Rikkyo University,
Tokyo 171-8501, Japan.\\
\texttt{Email:} \url{abetaku@rikkyo.ac.jp}
\bigskip

Lukas K\"uhne,
Universit\"at Bielefeld, Fakult\"at f\"ur Mathematik, Bielefeld, Germany.\\
\texttt{Email:} \url{lkuehne@math.uni-bielefeld.de}

\bigskip
Piotr Pokora,
Department of Mathematics,
University of the National Education Commission Krakow,
Podchor\c a\.zych 2,
PL-30-084 Krak\'ow, Poland. \\
\texttt{Email:} \url{piotr.pokora@uken.krakow.pl}
\end{document}